\title{Partial Bloch--Kato Selmer groups of $B$-pairs as delta functors}
\author{Rustam Steingart}
\address{Ruprecht-Karls-Universität Heidelberg,
	Mathematisches Institut,Im Neuenheimer Feld 205, D-69120 Heidelberg}
\keywords{$B$-pairs, $p$-adic Hodge Theory, local Selmer groups}
\subjclass[2020]{11F80}
\email{rsteingart@mathi.uni-heidelberg.de}
\date{\today}
\theoremstyle{plain}% default
\newtheorem{thm}{Theorem}[subsection]
\newtheorem{lem}[thm]{Lemma}
\newtheorem{rem}[thm]{Remark}
\newtheorem{cor}[thm]{Corollary}
\newtheorem{ex}[thm]{Example}
\newtheorem*{cor*}{Corollary}
\newtheorem{introtheorem}{Theorem}
\theoremstyle{definition}
\newtheorem{defn}[thm]{Definition}
\newcommand{\NN}{\mathbb{N}}
\newcommand{\Gal}{\operatorname{Gal}}
\newcommand{\Hom}{\operatorname{Hom}}
\newcommand{\ZZ}{\mathbb{Z}}
\newcommand{\QQ}{\mathbb{Q}}
\newcommand{\CC}{\mathbb{C}}
\newcommand{\fA}{\mathbf{A}}
\newcommand{\fB}{\mathbf{B}}
\newcommand{\cR}{\mathcal{R}}
\newcommand{\id}{\operatorname{id}}
\newcommand{\Bdr}{\fB_{\mathrm{dR}}}
\newcommand{\Bcris}{\fB_{\mathrm{cris}}}
\DeclarePairedDelimiter\abs{\lvert}{\rvert}%
\begin{document}
	\maketitle

	\begin{abstract}
		In this article we revisit the partial Selmer groups introduced by Ding in cohomological degree one. On the subcategory of partially de Rham positive $B$-pairs we extend them to higher cohomological degree and show that the resulting groups form a cohomological delta functor satisfying a variant of the Euler--Poincaré characteristic formula and Tate duality. 
	\end{abstract}
	\section*{Introduction} 
	Let $K/\QQ_p$ be finite and $K\subset E$ a finite extension containing a normal closure. 
	The fundamental exact sequence of $p$-adic Hodge--Theory 
	$$0 \to \QQ_p  \to \fB_e \oplus \Bdr^+ \to \Bdr \to 0$$
	 allows us to assign to a $p$-adic representation $V$ of $G_K$ $B$-pair 
	 $W:=(V \otimes_{\QQ_p}\fB_e,V\otimes_{\QQ_p}\Bdr^+)$ and if $V$ is an $E$-linear representation then $W$ is naturally an $E$-$B$-pair, i.e., a compatible tuple $(W_e,W_{\mathrm{dR}}^+)$ with $W^{??}_?$ a module over $E \otimes_{\QQ_p}\fB^{??}_{?}$ equipped with an action of $G_K.$
	In modern terms, the category of $B$-pairs is equivalent to the category of $G_K$-equivariant vector bundles on the Fargues--Fontaine curve but for the purposes of this article, we will stick to the $B$-pair viewpoint. 
	 From the decomposition $E\otimes_{\QQ_p}\Bdr= \prod_{\sigma\colon K\to E} E \otimes_{\sigma,K} \Bdr$ one obtains a decomposition of $W_{\mathrm{dR}}$ into components $W_{\mathrm{dR},\sigma}$ indexed by the set of embeddings $\Sigma_K = \Hom_{\QQ_p}(K,E).$ Recall that (global) representations coming from geometry  are de Rham above $p.$ The property of being de Rham is not independent of the place above $p,$ i.e., the embedding $\sigma\colon K\to E$ in the sense that the question whether 
	 $$\Bdr \otimes_E H^0(G_K,\Bdr\otimes_{K,\sigma}V) \to \Bdr\otimes_{K,\sigma} V$$ is an isomorphism depends on $\sigma.$ 
	 To give a specific example we can consider the extension 
	 $$\begin{pmatrix}
	 	1 & \log(\chi_{\mathrm{LT}})\\
	 	0 & 1
	 \end{pmatrix}$$
	 with the Lubin--Tate character $\chi_{\mathrm{LT}}$ of $E$ viewed as a representation of $G_E.$
	Using that the Hodge--Tate weights of $\chi_{LT}$ are $1$ at the identity
	 and $0$ at the other embeddings, one concludes that this extension is de Rham at all embeddings except the identity. \\
	  In \cite{ding2017protect} Yiwen Ding introduced 
	for an $E$-$B$-pair $W=(W_e,W_{dR}^+)$ and a subset $J\subseteq \Hom_{\QQ_p}(K,E)$ the partial Bloch--Kato Selmer group 
	$$H^1_{g,J}(G_K,W) := \ker[H^1(G_K,W) \to \prod_{\sigma \in J} H^1(G_K,W_{dR,\sigma})].$$  This subgroup captures precisely the extensions of the trivial representation by $W$ which are de Rham at the prescribed set of embeddings $J\subset \Sigma_K.$
	The most restrictive condition would be $J=\Sigma_K,$ which recovers the usual Bloch--Kato Selmer group $H^1_g(G_K,W).$\\
	We will consider these groups on the subcategory $\mathcal{BP}_E^{J,\mathbf{n}}$ for a multi-index $\mathbf{n}=(n_\sigma)_{\sigma\in J} \in \mathbb{N}^J,$ consisting of $B$-pairs which are de Rham of Hodge--Tate weight in $[-n_{\sigma},0]$ for all $\sigma\in J.$
	This is a generalisation of the category of ``analytic'' $B$-pairs consisting of those with Hodge--Tate weight $0$ at all but one embedding. 
	We show that one can define suitable $H^0_{g,J}(-),H^2_{g,J}(-)$ to extend the definition of Ding beyond $H^1$ in a way which satisfies a series of nice properties. 
	
	\begin{introtheorem} 
		\label{thm:mainresult} Let $K/\QQ_p$ be finite, let $J\subseteq \Sigma_K= \Hom_{\QQ_p}(K,\Bdr^+),$ let $E\subseteq \Bdr^+ $ be a normal closure and let $\mathbf{n} \in \NN^J.$
		Choose $\kappa_{\sigma}>n_\sigma$ for every $\sigma \in J$ and let  $\delta = [x \mapsto \prod_{\sigma \in J} \sigma(x)^{\kappa_\sigma}]$ viewed as a rank one object of $\mathcal{BP}_E.$ 
		Let $\chi_D:=\chi_{cyc}/\delta.$ Then:
		\begin{enumerate}[1.)]
			\item The group $H^1_{g,J}(W)$ agrees with $\operatorname{Ext}_{\mathcal{B}^{J,\mathbf{n}}_E}(B_E,W).$
			\item $(H^n_{g,J}(W))_{n \in \NN}$ is a delta functor. 
			\item $H^2_{g,J}(\chi_D)\cong H^2(\chi_{cyc})\cong E$ 
			\item We have the following Euler--Poincaré formula:
			$$-\operatorname{rank}(W)([K:\QQ_p]-\abs{J})=\sum_{i=0}^2 (-1)^i \dim_EH^i_{g,J}(W).$$ 
			\item For every $W \in \mathcal{BP}_E^{J,\mathbf{n}}$ we have $W^*(\chi_D) \in \mathcal{BP}_E^{J,\mathbf{n}}$ and the pairing
			$$H^i_{g,J}(W) \times H^{2-i}_{g,J}(W^*(\chi_D)) \to H^2_{g,J}(\chi_D)\cong E$$ is perfect. 
			
			\item For $J=\emptyset$ this specialises to Galois cohomology of $B$-pairs.  
			\item For $J = \Sigma_K \setminus \sigma_0$ and $\mathbf{n}= (0,\dots,0)$ this specialises to analytic cohomology, i.e., $H^i_{g,J}(W) \cong H^i_{an}(D(W)),$ where $D(W)$ is the analytic $(\varphi_K,\Gamma_K)$-module over $\cR_E$ attached to $W.$ (with respect to the embedding $\sigma_0$).  
		\end{enumerate}
	\end{introtheorem}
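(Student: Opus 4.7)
The plan is to construct a single complex $R\Gamma_{g,J}(W)$ computing the partial Selmer groups in all degrees, and then derive each property from this construction. Explicitly, I would define
$$R\Gamma_{g,J}(W) := \operatorname{fib}\Bigl[R\Gamma(G_K, W) \longrightarrow \prod_{\sigma \in J} C^\bullet_\sigma(W)\Bigr],$$
where $C^\bullet_\sigma(W)$ is a complex built from $W_{\mathrm{dR},\sigma}$ and its $\fB_{\mathrm{dR}}^+$-lattice, designed so that (a) the induced map on $H^1$ is the forgetful arrow of Ding, and (b) $W \mapsto C^\bullet_\sigma(W)$ is exact on $\mathcal{BP}^{J,\mathbf{n}}_E$. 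Setting $H^i_{g,J}(W) := H^i(R\Gamma_{g,J}(W))$, properties (1), (2), (6) fall out of the long exact sequence of the fiber: (6) is the empty-product case, (2) is functoriality of $\operatorname{fib}$ combined with the delta-functor axiom, and (1) reduces to a vanishing check on the $H^0$-to-$H^1_{g,J}$ boundary so that the degree-one piece matches Ding's kernel on the nose.

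For (3) and (4), additivity of Euler characteristics on the defining triangle gives
$$\chi(R\Gamma_{g,J}(W)) = -\operatorname{rank}(W)[K:\QQ_p] - \sum_{\sigma \in J} \chi(C^\bullet_\sigma(W)),$$
where the first term comes from the classical local Euler--Poincaré formula for $B$-pair cohomology (obtained via the fundamental exact sequence). The target formula (4) forces $\chi(C^\bullet_\sigma(W)) = -\operatorname{rank}(W)$ at each $\sigma \in J$, which pins down the correct definition of $C^\bullet_\sigma$. Property (3) is then an explicit computation: the twist $\chi_D = \chi_{cyc}/\delta$ with $\kappa_\sigma > n_\sigma$ is engineered so that $C^\bullet_\sigma(\chi_D)$ is acyclic in the degree relevant for $H^2$, so that $H^2_{g,J}(\chi_D)$ inherits its identification with $E$ directly from $H^2(G_K, \chi_{cyc})$.

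The heart of the argument is the Tate duality (5). I would begin with the Tate pairing for ordinary $B$-pair cohomology (Liu, Nakamura), which provides a perfect pairing $R\Gamma(G_K, W) \otimes^{\mathbf{L}} R\Gamma(G_K, W^*(\chi_{cyc})) \to E[-2]$. The strategy is to descend this along the defining fiber sequences to a perfect pairing of $R\Gamma_{g,J}$-complexes by exhibiting $C^\bullet_\sigma(W)$ and $C^\bullet_\sigma(W^*(\chi_D))$ as Verdier-dual (up to the degree shift absorbed into $\delta$) under the induced pairing on $W_{\mathrm{dR},\sigma}$-cohomology. The strict inequality $\kappa_\sigma > n_\sigma$ is exactly what ensures that $W^*(\chi_D)$ stays in $\mathcal{BP}^{J,\mathbf{n}}_E$ and that the local pairings are non-degenerate. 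I expect this step --- showing that the annihilator of the Selmer local condition under the global pairing equals the dual Selmer condition --- to be the main technical obstacle, requiring a careful analysis of local duality for filtered $\fB_{\mathrm{dR}}^+$-modules in the bounded Hodge--Tate weight range determined by $\mathbf{n}$.

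Finally, property (7) should follow from the Berger--Colmez equivalence between $B$-pairs and analytic $(\varphi_K, \Gamma_K)$-modules over the Robba ring: for $J = \Sigma_K \setminus \{\sigma_0\}$ and $\mathbf{n} = 0$, the partial de Rham condition at all embeddings outside $\sigma_0$ matches the analytic condition with respect to $\sigma_0$, and under this correspondence $R\Gamma_{g,J}(W)$ becomes quasi-isomorphic to the Herr complex computing $H^*_{an}(D(W))$.
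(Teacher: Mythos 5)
Your entire strategy rests on a complex $C^\bullet_\sigma(W)$ that you never construct: you only list the properties it would need (inducing Ding's forgetful map on $H^1$, exactness on $\mathcal{BP}^{J,\mathbf{n}}_E$, Euler characteristic $-\operatorname{rank}(W)$) and then say these requirements ``pin down'' its definition. That is reverse engineering, not a proof, and the paper explicitly remarks at the end that it does \emph{not} know how to realise $H^i_{g,J}(W)$ as the cohomology of an explicit complex depending on $W$. The obvious candidates fail: taking $C^\bullet_\sigma(W)=R\Gamma(G_K,W_{\mathrm{dR},\sigma})$ gives a fiber with the wrong $H^0$ and $H^1$, because $H^0(G_K,W_{\mathrm{dR},\sigma})\cong E^{\operatorname{rank}(W)}$ neither vanishes nor is hit by $H^0(W)$; killing that $H^0$ by a canonical truncation $\tau_{\geq 1}$ repairs degrees $0$ and $1$ but destroys exactness, since $\tau_{\geq 1}$ does not take short exact sequences of coefficients to distinguished triangles. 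So property (2), which you claim ``falls out of functoriality of $\operatorname{fib}$'', is exactly where the real work lies. The paper instead sets $H^0_{g,J}(W)=H^0(W)$ and $H^2_{g,J}(W)=H^2(W(\delta))$ and builds the connecting homomorphism $H^1_{g,J}(W_3)\to H^2(W_1(\delta))$ by hand, lifting along the surjection $H^1(W_3(\delta))\twoheadrightarrow H^1_{g,J}(W_3)$ coming from the twisting sequence of Remark \ref{rem:sesWdelta}; the well-definedness of that lift is a genuine argument using the positivity hypothesis (surjectivity of $H^0(W_{2,\mathrm{dR},\sigma}^+)\to H^0(W_{3,\mathrm{dR},\sigma}^+)$ together with the acyclicity statement of Lemma \ref{lem:Hodgetateacyclic}). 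None of this is visible in your outline.

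The downstream points inherit the gap. For (3) and (4) you compute the Euler characteristic of a hypothetical fiber rather than of the groups actually being defined; the paper's computation (Lemma \ref{lem:EPCgeneral}) goes through the four-term exact sequence relating $H^1(W(\delta))$ to $H^1_{g,J}(W)$. For (5) you would need $C^\bullet_\sigma(W)$ and $C^\bullet_\sigma(W^*(\chi_D))$ to be Verdier dual --- again a statement about an object you have not defined; the paper avoids this entirely by a dimension count from the Euler--Poincar\'e formula plus an injectivity argument comparing the restricted pairing to classical Tate duality via the diagram \eqref{eq:pairingdiagram}. For (7), the degree-$2$ comparison is not an automatic consequence of the equivalence of categories: the paper must argue by duality with an ad hoc input from \cite{SV2018}, precisely because the analytic duality of \cite{MSVW} requires a transcendental base change. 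In short, your skeleton is a reasonable wish list, but the single load-bearing object is missing, and the paper's proof shows the theorem is obtained by gluing ordinary Galois cohomology of $W$ and of its twist $W(\delta)$ rather than from any one complex.
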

	There are two choices involved: the bound $\mathbf{n}$ on the Hodge--Tate weights and the character $\delta$ used to define the dualising object $\chi_D.$ These go hand in hand, because the dual $W^*$ of $W \in \mathcal{BP}_E^{J,\mathbf{n}}$ does not belong to $\mathcal{BP}_E^{J,\mathbf{n}}$ and needs to be twisted by a suitable character to be in the same category. A similar issue arises because the dualising object of Galois cohomology $\chi_{cyc}$ does not belong to $\mathcal{BP}_E^{J,\mathbf{n}}$ unless $J= \emptyset.$ Note that even when $J=\emptyset$ the choice of $\chi_{cyc}$ as a dualising object is canonical in the sense that it is the only \textit{slope zero} $B$-pair of rank $1$ with $H^2 \cong \QQ_p.$ But there is a countable family of rank $1$ $B$-pairs over $\QQ_p$ having $H^2\cong \QQ_p$ given by modifications of $\chi_{cyc}.$ When $J\neq \emptyset$ we can not expect that a slope $0$ dualising object exists, and hence can not make such a choice. 
	The next-best option would be to take $k_\sigma = n_\sigma +1$ which in the analytic case leads to the dualising object used in \cite{colmez2016representations,MSVW}.  Our result is an improvement on the state of the art concerning analytic cohomology. 	These have been previously studied extensively in (among others) \cite{FX12},\cite{colmez2016representations},\cite{BFFanalytic}.  
	It is suggested in the introduction of \cite{FX12} that the viewpoint of \cite{nakamura2009classification} is less suitable for applications. 
Our results suggest that the opposite is the case, in fact the $B$-pair viewpoint allows us to give an arithmetic description in terms of Galois cohomology and show the Euler--Poincaré formula and Tate duality in full generality (for field coefficients) which were previously only known in the trianguline case after base change to a transcendental extension (cf. \cite{steingart2024iwasawa, MSVW}). We can also apply the main result for $J=\Sigma_K$ to obtain new formulae for the dimension of the Bloch--Kato Selmer group $H^1_g.$ 
	If $W \in \mathcal{BP}_E^{J,\mathbf{n}}$ then $W$ also belongs to $\mathcal{BP}_E^{J',\mathbf{n}}$ for any $J'\subset J.$ If for example $W$ is positive and de Rham, i.e., belongs to $\mathcal{BP}_E^{\Sigma_K,\mathbf{n}}$ for some $\mathbf{n},$ then, if we write $\Sigma_K = \bigcup_{i=0}^{[K:\QQ_p]} J_i$ as a nested union of subsets such that $\abs{J_i} = i$ we obtain a flag
	$$0 \subseteq H^1_{g}(W) = H^1_{g,J_d}(W) \subsetneq \dots H^1_{g,J_{1}}(W) \subsetneq  H^1_{g,\emptyset}(W) = H^1(W),$$
	where $d=[K:\QQ_p].$
	It would be interesting to define a filtration on $\mathbf{R}\Gamma(G_K,W)$ which induces this flag in degree one to obtain a spectral sequence. 
	\section*{Acknowledgements}
This research was supported by Deutsche Forschungsgemeinschaft (DFG) - Project number 536703837, which allowed me to carry out my research at the UMPA of the ENS de Lyon. I would like to thank the institution and in particular Laurent Berger for his guidance and many fruitful discussions. I also thank Marlon Kocher for his remarks on a previous version of this article. 
	\section{Preliminaries}
	We summarise the main classical viewpoints: $B$-pairs, cyclotomic $(\varphi,\Gamma)$-modules and Lubin--Tate $(\varphi,\Gamma)$-modules. 
	\subsection{$B$-pairs} 
	Let $K/\QQ_p$ be finite and $E/K$ a subfield of $\mathbf{B}_{\mathrm{dR}},$ finite over $K$ and containing the normal closure of $K.$ Let us denote $\Sigma_E:= \Hom(E,\Bdr^+)$ and fix one embedding $\sigma_0 \in \Sigma_E.$
	\begin{defn} \label{def:bpair}
		An \textbf{$E$-$B$-pair}  is  a pair $W=(W_e,W_{\mathrm{dR}}^+)$ consisting of a finite free $E\otimes_{\QQ_p}\fB_e$-representation $W_e$  of $G_K$ together with a $G_K$-equivariant $\Bdr^+$-lattice $W_{\mathrm{dR}}^+\subset W_{\mathrm{dR}}:= \Bdr \otimes_{\fB_e} W_e.$ We define the \textbf{rank} of $W$ as $\operatorname{rank}(W):=\operatorname{rank}_{E\otimes_{\QQ_p}\fB_e}W_e.$
		We denote by $C^\bullet(W)$ the complex $$[W_e \oplus W_{\mathrm{dR}}^+ \to W_{\mathrm{dR}}]$$ concentrated in degrees $[0,1].$ 
		We define the \textbf{Galois cohomology} of $W$  as 
		$$\mathbf{R}\Gamma(G_K,W):=\mathbf{R}\Gamma_{cts}(G_K,C^\bullet)$$ and write $H^i(G_K,W)$ for the cohomology groups.  
		
		We denote by $\mathcal{BP}_E$ the category of $E$-$B$-pairs with the obvious notion of morphisms. 
		A morphism $W \to W'$ of $B$-pairs is called \textbf{strict} if the co-kernel of $[W_{\mathrm{dR}}^+ \to (W')_{\mathrm{dR}}^+]$ is free. A subobject (quotient) of $W$ is called \textbf{strict } (resp. \textbf{strict at $\sigma$}) if the inclusion (projection) is strict in the above sense (resp. strict at the component corresponding to $\sigma.$). 
	\end{defn}
	We recall for an $E$-$B$-pair $(W_e,W_{\mathrm{dR}})$ we can write (as $\Bdr^+ \otimes_{\QQ_p}E$-modules) $W_{\mathrm{dR}}^{(+)} = \prod_{\sigma \in \Sigma_E}(W_{\mathrm{dR},\tau})^{(+)}$ by using the decomposition $E \otimes_{\QQ_p} \Bdr^+ \cong \prod_{\sigma \in \Sigma_E} \Bdr^+.$
	But this decomposition is in general not $G_K$-stable (but $G_E$-stable).
	Instead we can consider the decomposition
	$W_{\mathrm{dR}}^{(+)} = \prod_{\sigma \in \Sigma_K}(W_{\mathrm{dR},\sigma})^{(+)},$
	where $W_{\mathrm{dR},\sigma}^{(+)}$ is  a module over $\Bdr^{(+)}\otimes_{K,\sigma}E.$
	This decomposition turns out to be $G_K$-stable (with respect to the trivial $G_K$-action on the right tensor factor). The $G_K$-invariants are a finite-dimensional $K\otimes_{K,\sigma}E = E$-vector space.

	\begin{defn}
		
		We say $W$ is \textbf{$\sigma$-de Rham }if $H^0(G_K,W_{dr,\sigma})$ is free of rank $\operatorname{rank}(W)$ over $E.$ 
		We say $W$ is \textbf{positive at $\sigma$} if the same holds already for $H^0(G_K,W^+_{dr,\sigma}).$ 
	
	\end{defn}
We warn about the following subtlety: If, say, a Galois representation $V \in \operatorname{Rep}_{\QQ_p}(G_K)$ is positive then $H^0(G_K,\Bdr^+\otimes_{\QQ_p}V)$ is $\dim_{\QQ_p}V$-dimensional but this does not imply that 
		the natural map $$\Bdr^+ \otimes_K H^0(G_K,\Bdr^+\otimes_{\QQ_p}V) \to \Bdr^+ \otimes_{\QQ_p}V$$ is an isomorphism. As a counterexample consider $\QQ_p(-1).$ The image of the natural map 
		$$\Bdr^+ \otimes_K (H^0(G_K,\Bdr^+ \otimes \QQ_p(-1))) \to \Bdr^+ \otimes_{\QQ_p} \QQ_p(-1)$$ is $\operatorname{Fil}^1(\Bdr^+) \otimes_{\QQ_p} \QQ_p(-1).$

	One checks that the property of being positive passes to strict quotients and subobjects (but not to subobjects in general as can be seen with $(\fB_e,t\Bdr^+)\subseteq (\fB_e,\Bdr^+)$).
	
	For the Galois cohomology of $E$-$B$-pairs we recall Tate-duality and the Euler--Poincaré formula. 
	\begin{thm} \label{thm:BpairEulerTate} Let $B_E$ be the trivial $E$-$B$-pair of rank one. Let $W$ be a $G_K$-$E$-$B$-pair. 
		\begin{enumerate}
			\item We have $H^2(G_K,B_E(1)) \cong E.$
			\item $\sum_{i=0}^2(-1)^i \dim_E H^i(G_K,W)= -[K:\QQ_p]\operatorname{rank}(W).$
			\item The cup product induces a perfect pairing $$H^i(G_K,W) \times H^{2-i}(G_K,W^*(1)) \to H^2(G_K,B_E(1)) \cong E.$$
		\end{enumerate}
	\end{thm}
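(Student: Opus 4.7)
These results are classical: the rank-one cohomology computation, the Euler--Poincaré formula, and Tate local duality for $p$-adic Galois representations are due to Tate, and their extension to $B$-pairs is due to Nakamura or, equivalently via Berger's equivalence of categories, to Liu for $(\varphi,\Gamma)$-modules over the Robba ring $\cR$. My plan is to bootstrap from the representation case by two successive reductions.

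First I would reduce from $E$-coefficients to $\QQ_p$-coefficients via restriction of scalars. An $E$-$B$-pair $W$ of $E$-rank $r$ becomes, upon forgetting the $E$-structure, a $\QQ_p$-$B$-pair of $\QQ_p$-rank $r[E:\QQ_p]$, and each $H^i(G_K,W)$ acquires its $E$-action from endomorphisms of $W$ commuting with $G_K$. All three assertions then transfer cleanly from $\QQ_p$- to $E$-coefficients. For the rank-one claim (1) one further uses that $V\mapsto (V\otimes_{\QQ_p}\fB_e, V\otimes_{\QQ_p}\Bdr^+)$ defines a fully faithful and cohomology-preserving embedding $\operatorname{Rep}_{\QQ_p}(G_K)\hookrightarrow\mathcal{BP}_{\QQ_p}$ (immediate from the fundamental exact sequence and the definition of $C^\bullet$), which identifies $H^2(G_K,B_{\QQ_p}(1))$ with $H^2(G_K,\QQ_p(1))\cong\QQ_p$, i.e., Tate's original result.

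For parts (2) and (3) in the $\QQ_p$-case I would proceed by dévissage on the $\Bdr^+$-lattice. Every $B$-pair can be related, via short exact sequences, to a $B$-pair arising from a genuine Galois representation, the remaining term being a \emph{lattice modification}, i.e., a finite-length $\Bdr^+$-module with semilinear $G_K$-action viewed as a $B$-pair concentrated in the de Rham side. The long exact sequences of Galois cohomology together with additivity of Euler characteristics reduce the problem to two extreme cases: $B$-pairs coming from representations, for which everything is Tate, and lattice modifications, which have to be computed.

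The main obstacle is controlling the lattice modification pieces. Filtering by powers of the uniformiser $t$ reduces their cohomology to the cohomology of the graded quotients $\CC_p(j)$, and the Tate--Sen theorem gives $H^i(G_K,\CC_p(j))=0$ unless $(i,j)\in\{(0,0),(1,0)\}$, forcing such modules to have Euler characteristic zero. The residue pairing $t^{-N}\Bdr^+/\Bdr^+\otimes t^{-N}\Bdr^+/\Bdr^+\to \CC_p(-1)$ combined again with Tate--Sen supplies the self-duality of modification pieces under cup product. The five lemma then assembles these pieces with the representation case into the statements (2) and (3) for arbitrary $B$-pairs; alternatively, one may simply invoke Liu's theorem for $(\varphi,\Gamma)$-modules over $\cR$ via Berger's equivalence.
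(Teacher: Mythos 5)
The paper does not actually prove this theorem; it is quoted from the appendix of Nakamura's paper (equivalently, Liu's finiteness/duality/Euler--Poincar\'e theorems for $(\varphi,\Gamma)$-modules over the Robba ring, transported through Berger's equivalence). Your fallback option --- ``simply invoke Liu's theorem via Berger's equivalence'' --- is therefore exactly what the paper does, and your reduction from $E$- to $\QQ_p$-coefficients and the identification $H^i(G_K,W(V))\cong H^i(G_K,V)$ via the fundamental exact sequence are both correct and standard.

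The sketched direct proof, however, has a genuine gap at the d\'evissage step. You assert that every $B$-pair is related by short exact sequences to one arising from a genuine Galois representation, with error terms that are lattice modifications. A lattice modification never changes $W_e$, so this claim is equivalent to the assertion that every $\fB_e$-representation of $G_K$ is of the form $\fB_e\otimes_{\QQ_p}V$, i.e.\ that every $B$-pair admits a $G_K$-stable $\Bdr^+$-lattice making it \'etale. That is the crux of the whole matter and is nowhere justified: it is a statement of the same depth as ``weakly admissible implies admissible'' (already for the isoclinic bundle of slope $1/2$ one needs Colmez--Fontaine-type input to produce such a lattice), and Liu and Nakamura avoid it entirely by running the d\'evissage through Kedlaya's slope filtration instead --- reduce to isoclinic modules, then to the rank-one case by twisting, where the cohomology of $\cR(\delta)$ is computed by hand, and only \emph{then} use the modification exact sequences you describe. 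A second, more local, error: your residue pairing for the torsion pieces cannot land in $\CC_p(-1)$, whose $G_K$-cohomology vanishes identically by Tate--Sen; the correct dualising object for finite-length $\Bdr^+$-modules is $(\Bdr/\Bdr^+)(1)$, whose only cohomologically nontrivial graded piece is $\CC_p$ with $H^1(G_K,\CC_p)\cong K$. Moreover the torsion pieces are not self-dual in your sense: $H^0(G_K,\CC_p)$ pairs with $H^1$ of the \emph{dual} piece with a degree shift $i\leftrightarrow 1-i$, and all graded pieces $\CC_p(j)$ with $j\neq 0$ contribute nothing on either side, so perfectness of the assembled pairing requires an argument rather than the five lemma alone. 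If you intend to give a proof rather than a citation, you must either establish the ``every $B$-pair is a modification of an \'etale one'' claim or replace it with the slope-filtration reduction.
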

	\begin{proof} See \cite[Appendix]{Nak13}.
	\end{proof}
	In modern terms $B$-pairs should be viewed as an explicit description of equivariant vector bundles on the Fargues--Fontaine curve (cf.\cite{farguesfontainecourbe}). Originally Berger established an equivalence between $B$-pairs and $(\varphi,\Gamma)$-modules in \cite{berger2008construction}.
	
	\subsection{Cyclotomic $(\varphi,\Gamma)$-modules}
	For $K/\QQ_p$ finite let us recall the construction of  $\mathbf{B}^{\dagger}_{\text{rig},K},$ which is not to be confused with the Robba ring $\cR_K$ with coefficients in  $K.$ We denote by $K_0$ the maximal unramified subextension of $K.$
	Let $\tilde{\fA}:= W(\CC_p^\flat)$ and $\tilde{\fB}:=\tilde{\fA}[1/p].$
	Let $w$ be the Teichmüller lift of a pseudouniformiser of $o_{\CC_p^\flat}.$
	Any element of $f \in W(o_{\CC_p^\flat})[1/p,1/w]$ can be written uniquely as a convergent series $$f= \sum_{k\gg -\infty}p^k[x_k]$$ with $x_k$ a bounded sequence in $\CC_p^\flat.$ For $0<r<1$ define $$\abs{f}_r := \sup_k \abs{p}^k(\abs{x_k}_\flat)^{-\log_p(r)}$$ and $\abs{f}_I := \sup_{r \in I} \abs{f}_r.$ \footnote{The precise value $\abs{f}_r$ depends on the chosen normalisation, but an expression of the form $\abs{f}_r = \abs{x}$  for some $x \in \CC_p$ is independent of the normalisation. } For a closed interval $I \subseteq (0,1)$ we denote by $\tilde{\fB}^I$ the completion of $ W(o_{\CC_p^\flat})_L[1/\pi_L,1/w]$ with respect to $\abs{-}_I.$
	Finally one defines $\tilde{\fB}^{\dagger,r}:=\varprojlim_{s<1} \tilde{\fB}^{[r,s]}.$
	Let $\mathbf{A} \subset \tilde{\fA}$ be the maximal unramified extension of the $p$-adic completion of $o_{K_0}(([\varepsilon]-1))$ and $\mathbf{B} = \fA[1/p].$ Set $\mathbf{B}_K:=\mathbf{B}^{\Gal(\overline{K}/K_{K_{\text{cyc}}})}$ and $\mathbf{B}_K^{\dagger,r}:= \tilde{\fB}^{\dagger,r} \cap \fB_K.$ Lastly, define $\fB^{\dagger,r}_{\text{rig},K}$ (resp. $\fB^{\dagger}_{\text{rig},K})$ as the completion of  $\fB^{\dagger}_{K}$ with respect to the Fréchet topology (resp. as $\bigcup_r\fB^{\dagger,r}_{\text{rig},K}$ ).
	Recall that $\tilde{\fB}$ is equipped with natural actions $\varphi_p$ and $G_K$ inducing actions of $\varphi_p$ and $\Gamma^{\text{cyc}}_K:=G_K/\Gal(\overline{K}/K_{\text{cyc}}).$
	\begin{defn} Let $R$ be a topological $o_{K_0}$-algebra endowed with an endomorphism $\varphi_p$ extending the Frobenius and a continuous action of $\Gamma^{\text{cyc}}_K.$
		A $(\varphi_p,\Gamma^{\text{cyc}}_K)$-module over $R$ is a finite free $R$-module $D$ with $\varphi_p$-semilinear endomorphism $\varphi$ and a continuous semilinear $\Gamma^{\text{cyc}}_K$-action commuting with $\varphi$ and such that 
		$$\varphi^*(D) =R \otimes_{\varphi_p,R}  D \xrightarrow{\id\otimes \varphi_D} D$$ is an isomorphism.
	\end{defn}
	\begin{rem}
		For a $(\varphi_p,\Gamma^{\text{cyc}}_K)$ over $E \otimes_{\QQ_p} \fB^{\dagger}_{\text{rig},K}$ it suffices to assume freeness over $\fB^{\dagger}_{\text{rig},K},$ i.e., a not-necessarily free $(\varphi_p,\Gamma^{\text{cyc}}_K)$-module over $E \otimes_{\QQ_p} \fB^{\dagger}_{\text{rig},K}$-module, whose underlying $\fB^{\dagger}_{\text{rig},K}$-module is free is also free over $E \otimes_{\QQ_p} \fB^{\dagger}_{\text{rig},K}.$
	\end{rem}
	\begin{proof}
		See \cite[Lemma 1.30]{nakamura2009classification}.
	\end{proof}
	We recall that a rank one $(\varphi_p,\Gamma^{\text{cyc}}_K)$ over $\fB^{\dagger}_{\text{rig},K}$ admits a basis with respect to which the $(\varphi_p,\Gamma^{\text{cyc}}_K)$-action is given by a character 
	$\delta \colon \QQ_p^\times  \to \QQ_p^\times$ and we define the degree to be $v_p(\delta(p)).$ In general we define $\deg(D):= \deg(\det D),$ where $\det D$ denotes the highest exterior power of $D.$ 
	We further define the slope $\mu(D):= \deg(D)/\operatorname{rank}(D).$ 
	For  $(\varphi_p,\Gamma^{\text{cyc}}_K)$ over $E \otimes_{\QQ_p}\fB^{\dagger}_{\text{rig},K}$ we define their degree and slope as the degree (resp. slope) of the underlying $(\varphi_p,\Gamma^{\text{cyc}}_K)$-module over $\fB^{\dagger}_{\text{rig},K}.$ We call $D$ semi-stable of slope $s$ if $\mu(D)=s$ and every $\varphi$-submodule  $0 \neq D' \subseteq D$ has slope $\mu(D')\geq \mu(D).$
	\begin{thm}
		There is an exact equivalence of categories between the category of $G_K$-$E$-$B$-pairs and $(\varphi_p,\Gamma_K^{\text{cyc}})$-modules over $E\otimes_{\QQ_p}\fB^{\dagger}_{\text{rig},K}.$ 
		The functor $V \mapsto W(V) := (\fB_e \otimes_{\QQ_p}V,\Bdr^+\otimes_{\QQ_p},V)$ defines a fully faithful functor from the category of $E$-linear representations of $G_K$ to the category of $G_K$-$E$-$B$-pairs, whose essential image consists of those $B$-pairs $W$ whose $(\varphi_p,\Gamma_K^{\text{cyc}})$-module $D(W)$ is semi-stable of slope $0.$
	\end{thm}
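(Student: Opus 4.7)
The plan is to reduce the theorem to Berger's original equivalence without coefficients and Kedlaya's slope filtration theorem, then handle the $E$-coefficient refinement formally.

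First I would recall Berger's construction of the functor $D\colon\mathcal{BP}_{\QQ_p}\to \{(\varphi_p,\Gamma_K^{\mathrm{cyc}})\text{-modules over }\fB^{\dagger}_{\operatorname{rig},K}\}$ from \cite{berger2008construction}. The idea is to base change $W_e$ to $\tfBdrig$, take $G_{K_{\mathrm{cyc}}}$-invariants (using that $\tfBdrig^{G_{K_{\mathrm{cyc}}}}=\fB^{\dagger}_{\operatorname{rig},K}$), and then adjust by intersecting with a suitable $\Bdr^+$-lattice condition coming from $W_{\mathrm{dR}}^+$ in order to land in the correct $\fB^{\dagger}_{\operatorname{rig},K}$-lattice. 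The inverse functor is Berger's de Rham localisation: from a $(\varphi_p,\Gamma_K^{\mathrm{cyc}})$-module $D$ one recovers $W_e$ by inverting $t$ and tensoring up to the appropriate period ring over which the slope filtration splits, and $W_{\mathrm{dR}}^+$ by taking the canonical $\Bdr^+$-lattice constructed from the Hodge--Tate filtration. Checking that these are quasi-inverse is essentially a matching of lattices inside $\Bdr\otimes W_e$.

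Next I would upgrade to $E$-coefficients. Since both sides are $\QQ_p$-linear abelian categories and the equivalence is compatible with the natural $E$-actions (both coming from an external action that commutes with the structure morphisms), the equivalence restricts to the subcategories of objects equipped with a compatible $E$-action, giving the statement for $\mathcal{BP}_E$. Freeness on the $(\varphi,\Gamma)$-side follows from the cited remark (\cite[Lemma~1.30]{nakamura2009classification}).

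For the second assertion, the functor $V\mapsto W(V)=(\fB_e\otimes V,\Bdr^+\otimes V)$ is manifestly fully faithful because one can recover $V$ as $W_e^{\varphi=1}$ (using the fundamental exact sequence $0\to\QQ_p\to\fB_e\oplus\Bdr^+\to\Bdr\to 0$ and $H^0$). The essential image characterisation is where Kedlaya's slope filtration theorem enters: $W$ lies in the image iff the underlying $\fB_e$-module of $W_e$ is of the form $\fB_e\otimes_{\QQ_p}V$ for a genuine $p$-adic representation, which in turn (via Berger's dictionary translating the étale condition into a slope condition on the associated $(\varphi,\Gamma)$-module over $\fB^{\dagger}_{\operatorname{rig},K}$) is equivalent to $D(W)$ being pure, and hence semi-stable, of slope $0$.

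The main obstacle is really bookkeeping: assembling the two lattices $W_e$ and $W_{\mathrm{dR}}^+$ into a single $(\varphi_p,\Gamma_K^{\mathrm{cyc}})$-module $D(W)$ that is simultaneously of the correct rank, stable under $\varphi$ and $\Gamma_K^{\mathrm{cyc}}$, and recovers both lattices by the appropriate base change; and on the $E$-coefficient level, verifying that the freeness issue does not obstruct the equivalence, which is exactly the content of the cited remark. Since all of this is already carried out in \cite{berger2008construction} and \cite{nakamura2009classification}, the proof reduces to invoking these references.
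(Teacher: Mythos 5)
Your proposal takes essentially the same route as the paper, whose proof consists of citing \cite[Théorème 2.2.7]{berger2008construction} for $E=\QQ_p$ and \cite[Theorem 1.36]{nakamura2009classification} for general $E$ (together with the freeness remark); your sketch of what those references do, including the role of the slope filtration theorem for the essential-image statement, is accurate. One small slip: full faithfulness of $V\mapsto W(V)$ is not obtained from ``$W_e^{\varphi=1}$'' (there is no $\varphi$ on $W_e$; the ring $\fB_e$ is already $\Bcris^{\varphi=1}$), but from the fundamental exact sequence identifying $H^0(G_K,V'')$ with $\ker\bigl(H^0(G_K,V''\otimes\fB_e)\oplus H^0(G_K,V''\otimes\Bdr^+)\to H^0(G_K,V''\otimes\Bdr)\bigr)$ applied to internal Homs.
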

	\begin{proof}The case $E=\QQ_p$ is \cite[Théorème 2.2.7]{berger2008construction}.
		For general $E$ see	\cite[Theorem 1.36]{nakamura2009classification}.
	\end{proof}
	\subsection{Lubin--Tate $(\varphi_L,\Gamma_L)$-modules.}
	Let $L/\QQ_p$ be finite and $\varphi_L$ a Lubin--Tate power series attached to a uniformiser $\pi_L$ of $L.$ Let $L_\infty$ be the corresponding Lubin--Tate extension, obtained by adjoining the $\pi_L^n$-torsion points of the formal group attached to $\varphi_L$ and let $\Gamma_L^{\textrm{LT}}:= \Gal(L_\infty/L).$ To $V \in \operatorname{Rep}_L(G_L)$ one can also attach a Lubin--Tate $(\varphi_L,\Gamma_L^{\textrm{LT}})$-module over the $B_L:= \widehat{o_L((T))}[1/p],$ where $\widehat{(-)}$-denotes $p$-adic completion. Providing again an equivalence between representations and étale $(\varphi_L,\Gamma_L^{\textrm{LT}})$-modules but contrary to the cyclotomic case the picture changes if we replace the coefficient ring $B_L$ by the Robba ring $\mathcal{R}_L$ consisting of Laurent series with coefficients in $L$ converging on some half-open annulus $r \leq \abs{x} <1$ for some $r \in (0,1).$ The category of étale $(\varphi_L,\Gamma_L^{\textrm{LT}})$-modules over $\cR_L$ is instead equivalent to the category of so-called overconvergent representations, which for $L\neq \QQ_p$ is a proper subcategory of $\operatorname{Rep}_L(G_L).$ A sufficient condition for overconvergence is that the $\Gamma_L^{\textrm{LT}}$-action is locally $L$-analytic. For an $L$-analytic $(\varphi_L,\Gamma_L^{\mathrm{LT}})$-module $M$ over the Robba ring the action of $\Gamma_L^{\mathrm{LT}}$ extends to an action of the algebra $\mathcal{D}:=D(\Gamma_L^{\mathrm{LT}},L)$ of $L$-valued $L$-analytic distributions. The analytic cohomology groups of $M$ can be defined as 
	$$H^i_{an}(M):=\operatorname{Ext}^i_{\mathcal{D}[X]}(L,M),$$ where $X$ is a variable with $Xm:=(\varphi_L-1)m$ for $m \in M.$ 
	For $i=0,1$ one has $H^i_{an}(M) = \operatorname{Ext}^i_{an}(\cR_L,M)$,
	where the right hand side denotes for $i=0$ the set of homomorphisms from $\cR_L$ to $M$, and for $i=1$ the set of extensions of $\cR_L$ by $M,$ which are themselves analytic. 

	\section{Partial Selmer groups}
	\subsection{Rank one $B$-pairs and twists}
	Let us recall that any rank one $E$-$B$-pair is isomorphic to $W(\delta)$ for a continuous character $\delta \colon K^\times \to E^\times.$ (cf. \cite[Theorem 1.45]{nakamura2009classification})
	Here $W(\delta):= W(\mathbf{B}^{\dagger}_{\text{rig},K}(\delta)).$
	Where $\mathbf{B}^{\dagger}_{\text{rig},K}(\delta))$ is the rank one module obtained as follows: Write $\delta = \delta_0 \delta_1$ such that $(\delta_0)_{\mid o_L^\times} = (\delta)_{\mid o_L^\times}$ and $\delta_0(\pi_K)=1.$ Then $\delta_0$ corresponds by local class field theory to a character $G_K \to E^\times$ (which in turn corresponds to an étale $(\varphi,\Gamma)$-module $\mathbf{B}^{\dagger}_{\text{rig},K}(\delta_0))$ and take $\mathbf{B}^{\dagger}_{\text{rig},K}(\delta_1)$ to be the rank one module with basis $e_{\delta_1}$ with trivial $\Gamma$-action and $\varphi_q(e_{\delta_1}) = \delta_1(\pi_K)e_{\delta_1}.$ Finally set 
	$\mathbf{B}^{\dagger}_{\text{rig},K}(\delta):= \mathbf{B}^{\dagger}_{\text{rig},K}(\delta_0) \otimes \mathbf{B}^{\dagger}_{\text{rig},K}(\delta_1).$
	
	\begin{lem} \label{lem:deltatwist}
		Let $\delta \colon K^\times \to E^\times$ be of the form 
		$\delta = \prod_{\sigma \in \Sigma_K} \sigma(x)^{k_\sigma}.$ With $k_\sigma \in \ZZ.$ Then 
		$$W(\delta) = (E \otimes_{\QQ_p} \fB_e, \bigoplus_{\sigma\colon K \to E} t^{k_\sigma} \Bdr^+\otimes_{K,\sigma} E).$$
		In particular, twisting with $\delta$ does not change the $\fB_e$-component of a $B$-pair. 
	\end{lem}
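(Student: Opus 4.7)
The plan is to exploit multiplicativity in $\delta$ and then invoke the classical Hodge--Tate theory of characters produced by local class field theory. Since $W(\delta\delta') \cong W(\delta)\otimes W(\delta')$, and the right hand side of the claimed identification is compatible with tensor products (the $\fB_e$-component is preserved and the $t$-exponents add summand-wise), it suffices to treat a single factor $\delta(x) = \sigma(x)^{k}$ with $\sigma \in \Sigma_K$ and $k \in \ZZ$.

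For such a $\delta$, I would apply the decomposition $\delta = \delta_0 \delta_1$ recalled in the paragraph before the lemma. The factor $\delta_1$ is trivial on $o_K^\times$ with $\delta_1(\pi_K) = \sigma(\pi_K)^k$, and the associated rank one $(\varphi_p,\Gamma_K^{\mathrm{cyc}})$-module has trivial $\Gamma$-action and $\varphi$ given by multiplication by the unit $\sigma(\pi_K)^k$ of $\Bdr^+$. Rescaling a basis element identifies $W(\delta_1)$ with $(E \otimes_{\QQ_p}\fB_e,\, \Bdr^+ \otimes_{\QQ_p} E)$ as $B$-pairs (carrying a non-trivial $G_K$-action but the standard underlying lattice), so $\delta_1$ contributes no shift in any $\tau$-summand.

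For $\delta_0$, local class field theory produces an $E$-valued continuous Galois character $\psi \colon G_K \to E^\times$ (an honest representation, since $\delta_0$ takes values in $o_E^\times$ on inertia). The standard computation for locally algebraic characters of the form $x \mapsto \sigma(x)^k$ gives $\psi$ Hodge--Tate weight $k$ at the embedding $\sigma$ and $0$ at every other embedding, and shows moreover that $\psi$ is de Rham. Hence $W(\delta_0) = W(\psi)$ has $\fB_e$-part $E \otimes_{\QQ_p}\fB_e$, and using $\Bdr^+ \otimes_{\QQ_p} E \cong \prod_{\tau \in \Sigma_K}\Bdr^+ \otimes_{K,\tau} E$, its $\Bdr^+$-lattice is obtained from the standard one by replacing the $\tau = \sigma$ summand with $t^k \Bdr^+ \otimes_{K,\sigma} E$. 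Together with the computation for $\delta_1$ and the multiplicative reduction, this yields the formula asserted in the lemma.

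The only non-routine aspect of the argument is the bookkeeping of sign conventions: the normalisation of the reciprocity map used to produce $\fB^{\dagger}_{\mathrm{rig},K}(\delta_0)$ must be aligned with the sign convention for Hodge--Tate weights so that the exponent of $t$ ends up in the correct summand with the correct sign. Once the conventions are fixed the computation is routine, and the final assertion that twisting by $\delta$ does not alter the $\fB_e$-component is immediate from the explicit form obtained.
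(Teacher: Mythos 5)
The paper offers no argument of its own here --- it simply cites \cite[Lemma 2.12]{nakamura2009classification} --- so the only question is whether your sketch stands on its own, and I don't think it does. The gap is concentrated in the treatment of $\delta_1$ and in the final assembly. The claim that rescaling a basis identifies $W(\delta_1)$ with $(E\otimes_{\QQ_p}\fB_e,\Bdr^+\otimes_{\QQ_p}E)$ ``with the standard underlying lattice'', so that $\delta_1$ contributes no shift, is false: the degree of a rank one $B$-pair (the relative position of $W_{\mathrm{dR}}^+$ with respect to the $\Bdr^+$-lattice spanned by an $\fB_e$-basis, summed over the $\tau$-components) is an isomorphism invariant which vanishes for every pair of the shape $(\fB_e\otimes E(\eta),\Bdr^+\otimes E(\eta))$, whereas $W(\delta_1)$ has degree $v_p(\delta_1(\pi_K))=k\,v_p(\sigma(\pi_K))\neq 0$. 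The relevant feature of $\delta_1(\pi_K)=\sigma(\pi_K)^k$ is not that it is a unit of $\Bdr^+$ but that it has non-zero $p$-adic valuation: any $\mu$ with $\varphi(\mu)=\sigma(\pi_K)^{-k}\mu$ necessarily has non-zero $t$-adic valuation in some component of $\Bdr\otimes E$, so the rescaled basis does move the lattice. (Already for $K=\QQ_p$ and $\delta=x$ one has $\delta_1(p)=p$, $\mu=t^{-1}$, and $W(\delta_1)=(\fB_e\cdot t^{-1}e,\,\Bdr^+\cdot e)$, whose lattice is $t\Bdr^+$ relative to the $\fB_e$-basis $t^{-1}e$.) Dually, $W(\delta_0)=W(\psi)$ is \'etale of degree $0$, so its lattice cannot be ``$t^k$ at $\sigma$, standard elsewhere'' relative to a trivialisation of its $\fB_e$-part (which is in any case not trivialisable when $k\neq 0$, cf.\ $\fB_e(1)\not\cong\fB_e$); your description of $W(\delta_0)^+_{\mathrm{dR}}$ is taken relative to a basis of $D_{\mathrm{dR}}(\psi)$, i.e.\ a trivialisation of $W_{\mathrm{dR}}$ rather than of $W_e$, and the two bookkeepings cannot simply be multiplied together.

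More importantly, the actual content of the lemma is never addressed: one must show that the $\fB_e$-component of $W(\delta_0)\otimes W(\delta_1)$ is the \emph{trivial} $G_K$-module $E\otimes_{\QQ_p}\fB_e$ and then locate the lattice relative to that trivialisation. Concretely this means exhibiting an invertible period $b$ in $\Bcris\otimes_{\QQ_p}E$ (after inverting $t$) with $\varphi(b)=\sigma(\pi_K)^{-k}b$ and $g(b)=\psi(g)^{-1}b$, and computing its $t$-adic valuation at each embedding (non-zero exactly at $\sigma$, where it accounts for the $t^{k}$). De Rham-ness of $\psi$ does not give this; what is needed is that $\psi$ is crystalline up to unramified twist with crystalline Frobenius eigenvalue matching $\delta_1(\pi_K)$ --- precisely the compatibility between $\delta_0$ and $\delta_1$ coming from the fact that both are cut out of $\sigma^k$, and precisely the period computation that constitutes Nakamura's proof (for $K=\QQ_p$, $\sigma=\id$, $k=1$ the period is $b=t^{-1}$). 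Your multiplicative reduction to a single factor $\sigma^k$ and the splitting $\delta=\delta_0\delta_1$ are a fine start, but without producing this period the argument does not close, and the difficulty is not merely one of sign conventions.
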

	\begin{proof}
		\cite[Lemma 2.12]{nakamura2009classification}.
	\end{proof}
	\subsection{Ding's partial de Rham $B$-pairs}
	
	Ding defined for $J\subset \Sigma_K$
	$$H^1_{g,J}(G_K,W) := \ker[H^1(G_K,W) \to \prod_{\sigma \in J} H^1(G_K,W_{\mathrm{dR},\sigma})].$$
	
	Let us recall another construction from \cite{ding2017protect}. 
	\begin{defn}
		For $\kappa = (\kappa_{\sigma})_{\sigma \in \Sigma_K} \in \ZZ^{\Sigma_K}$ we can define a character $\delta_{\kappa} \colon K^\times \to E^\times$ by $x \mapsto \prod \sigma(x)^{\kappa_\sigma}.$ For an $E$-$B$-pair $W=(W_e,W_{\mathrm{dR}}^+)$ we denote by $W(\delta_{\kappa})$ the $E$-$B$-pair with $W(\delta_{\kappa})_e:=W_e$ and $W^+(\delta_{\kappa})_{\mathrm{dR},\sigma}= t^{\kappa_{\sigma}}W^+_{\mathrm{dR},\sigma}.$ 
	\end{defn}
	\begin{rem}
		\label{rem:sesWdelta}
		Let $J\subset \Sigma_K$ and $W$ be $\sigma$-de Rham for all $\sigma \in J.$ For each $\sigma \in J$ choose $\kappa_\sigma>0$ such that $H^0(G_K,t^{\kappa_{\sigma}}W_{\mathrm{dR},\sigma}^+)=0$ then we have a short exact sequence of complexes 
		$$0 \to C^\bullet(W(\delta_{\kappa})) \to C^\bullet(W)\to \bigoplus_{\sigma \in J} W_{\mathrm{dR},\sigma}^+/t^{\kappa_\sigma}W_{\mathrm{dR},\sigma}^+[0] \to 0$$
		inducing a short exact sequence 
		$$0 \to H^0(G_K,W) \to \oplus_{\sigma \in J} H^0(G_K,W^+_{\mathrm{dR},\sigma}) \to H^1(G_K,W(\delta)) \to H^1_{g,J}(G_K,W)\to 0$$
	\end{rem}
	\begin{proof}
		See \cite[after Lemma 1.11]{ding2017protect}
	\end{proof}
	\subsection{Comparison to the analytic case}
	
	\begin{defn}
		We say $ V \in \operatorname{Rep}_E(G_K)$ is $K$-analytic (with respect to $\sigma_0 \in \Sigma_K$) if $V$ is Hodge--Tate of weight $0$ at every $\sigma \neq \sigma_0.$ By this we mean
		$\CC_p \otimes_{K,\sigma} V \cong \CC_p^{\dim_KV}$ for every $\sigma \neq \sigma_0.$ Equivalently, $H^0(G_K,\CC_p \otimes_{K,\sigma} V)$ is of dimension $\dim_KV.$ 
	\end{defn}
	
	For a subset $\Sigma \subset \Sigma_K$ and $V \in \operatorname{Rep}_E(G_K)$ we define 
	$$H^1_{\Sigma}(V):= \operatorname{Ker}[H^1(G_K,V) \to \prod_{\sigma \in \Sigma}(H^1(G_K,B_{HT} \otimes_{K,\sigma} V))].$$
	We denote by $\nabla_\sigma\colon H^1(G_K,V) \to H^1(G_K,B_{HT} \otimes_{K,\sigma} V)$ the natural map induced by $v \mapsto 1 \otimes v.$
	\begin{ex}Suppose $V$ is Hodge--Tate at $\sigma.$ Then $X \in H^1(G_K,V)$ (viewed as an extension of $E$ by $V$) is Hodge--Tate at $\sigma$ if and only if $\nabla_{\sigma}X=0.$
		If $V$ is $\CC_p$-admissible at $\sigma,$ then $X$ is $\CC_p$-admissible at $\sigma$ if and only if $\nabla_\sigma X=0.$
	\end{ex}
	\begin{proof}
		Suppose $X$ is Hodge--Tate at $\sigma.$  But then $B_{HT} \otimes_{K,\sigma}X \cong B_{HT}^{\dim_KV+[E:K]}$ and in particular $\nabla_\sigma X=0.$
		If $\nabla_\sigma X=0,$ then $B_{HT}\otimes_{K,\sigma}X = B_{HT} \otimes_{K,\sigma}V \oplus B_{HT} \cong B_{HT}^{\dim_K V+[E:K]}$ as $V$ is assumed to be Hodge--Tate. 
		The second part follows along the same lines using that $\CC_p$-admissibility is equivalent to being Hodge--Tate of weight $0.$
	\end{proof}
	\begin{lem}\label{lem:htW0derham}
		Let $W$ be a free $\Bdr^+$-module of rank $d$ with a continuous action of $G_K$ such that $W/tW \cong \CC_p^d$ as representations.
		Then 
		$$\Bdr^+ \otimes_KH^0(G_K,W) \to W$$ is an isomorphism.
	\end{lem}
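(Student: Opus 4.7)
The natural approach is to filter $W$ by the $t$-adic filtration and reduce to Tate--Sen. More precisely, I would set up the short exact sequences
$$0 \to t^n W / t^{n+1}W \to W/t^{n+1}W \to W/t^n W \to 0$$
of continuous $G_K$-representations. Since multiplication by $t^n$ gives a Galois-equivariant isomorphism $(W/tW)(n) \xrightarrow{\sim} t^n W / t^{n+1}W$ (the twist comes from the fact that $G_K$ acts on $t$ via $\chi_{\mathrm{cyc}}$), and since $W/tW \cong \CC_p^d$ by hypothesis, each graded piece is isomorphic to $\CC_p(n)^d$.

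Now I would invoke the Tate--Sen theorem: $H^i(G_K, \CC_p(n)) = 0$ whenever $n \neq 0$ and $i \in \{0,1\}$, while $H^0(G_K, \CC_p) = K$. In particular, the long exact sequence attached to the short exact sequence above gives that $H^0(G_K, W/t^{n+1}W) \to H^0(G_K, W/t^n W)$ is an isomorphism for every $n \geq 1$, and $H^0(G_K, W/tW) = K^d$ since $W/tW$ is $\CC_p$-admissible. Inductively $H^0(G_K, W/t^n W) \cong K^d$ for every $n \geq 1$, with all transition maps isomorphisms.

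Since $W$ is free of finite rank over the $t$-adically complete ring $\Bdr^+$, we have $W = \varprojlim_n W/t^n W$, and continuous $G_K$-invariants commute with this inverse limit (the transition maps on $H^0$ are already surjective, so Mittag--Leffler is trivially satisfied). Therefore $H^0(G_K, W) \cong K^d$, and moreover the map $H^0(G_K, W) \to H^0(G_K, W/tW) = (W/tW)^{G_K}$ is an isomorphism.

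Finally, to conclude that $\Bdr^+ \otimes_K H^0(G_K, W) \to W$ is an isomorphism, I would observe that both source and target are free $\Bdr^+$-modules of rank $d$ and that both are $t$-adically complete and separated. Modulo $t$ the map becomes $\CC_p \otimes_K (W/tW)^{G_K} \to W/tW$, which is an isomorphism because $W/tW \cong \CC_p^d$ is $\CC_p$-admissible (this is Sen's theorem). Topological Nakayama then gives surjectivity, and since both sides are free of the same rank over the discrete valuation ring $\Bdr^+$, surjectivity forces bijectivity. The main (minor) subtlety is keeping careful track of the Galois twists in the associated graded, but once the filtration is set up the rest is a clean Tate--Sen plus Nakayama argument.
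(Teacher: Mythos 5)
Your proof is correct and follows essentially the same route as the paper: filter by powers of $t$, use Tate--Sen vanishing of $H^0$ and $H^1$ of $\CC_p(n)$ for $n\neq 0$ to identify $H^0(G_K,W)$ with $H^0(G_K,W/tW)$, and then conclude by Nakayama together with the fact that a surjection between free modules of equal rank is injective. You are merely more explicit than the paper about the Galois twists on the graded pieces, the passage to the inverse limit, and the use of $\CC_p$-admissibility of $W/tW$ to get surjectivity modulo $t$.
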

	\begin{proof}
		First consider the short exact sequence
		$$0 \to tW/t^2W \to W/t^2W \to W/tW \to 0.$$
		We have by assumption $tW/t^2W \cong C_p(1)^d$ and hence by \cite{iovita1999galois} $H^i(G_K,tW/t^2W)=0$ for $i=0,1$ which implies 
		$H^0(G_K,W/t^2W) \cong H^0(G_K,W/tW) (\cong K^d).$
		Arguing by induction we conclude 
		$H^0(G_K,W) \cong H^0(G_K,W/tW).$
	The natural map 
	$$\Bdr^+ \otimes_KH^0(G_K,W) \to W$$  is thus a map between finitely generated $\Bdr^+$-modules which is surjective modulo the maximal ideal $t\Bdr^+,$ which by Nakayama's Lemma implies that the map is surjective. 
	A surjective homomorphism between free modules of the same rank is injective, hence the claim.
	
	\end{proof}
	%\begin{defn}
	%	Let $d=[L:\QQ_p]$ and write 
	%	$$\emptyset \subset \{\sigma_1\} \subset \{\sigma_1,\sigma_2\} \subset \dots \subset \{\sigma_1,\dots,\sigma_d\}=\Sigma_L$$ be an exhaustion of $\Gal(L/\QQ_p).$
	%	$\operatorname{Fil}^0H^1(G_L,V) := H^1(G_L,V)$ and for $1 \leq i \leq [L:\QQ_p]$
	%	$$\operatorname{Fil}^i(H^1(G_L,V)) = \bigcap_{i=1}^{i} \ker\nabla_i = H^1_{ \{\sigma_1,\dots,\sigma_i\}}(V).$$
	%\end{defn}
	%\begin{ex}
	%	Assume $V$ is $\CC_p$-admissible at every $\sigma$ for $\sigma \neq \id.$ Then if we take $\sigma_d= \id$ in the situation above we have
	%	$$\operatorname{Fil}^{d-1}(H^1(G_L,V)) = H^1_{an}(V).$$ 
	%\end{ex}
	%
	
	\begin{lem}
		\label{lem:deRhamselmer}
		Let $V$ be de Rham at $\sigma \colon E \to \CC_p$ of Hodge--Tate weight $0.$ Then $X \in H^1(G_K,V)$ is de Rham at $\sigma$ if and only if $X$ is Hodge--Tate at $\sigma.$
		In particular, the kernel of the natural map $\nabla_\sigma\colon H^1(G_K,V) \to H^1(G_K,B_{HT} \otimes_{K,\sigma} V))$  is the geometric Bloch--Kato group 
		$$H^1_{g,\sigma}(V)= \operatorname{ker}(H^1(G_K,V) \to H^1(G_K,\Bdr\otimes_{K,\sigma}V))$$
		at $\sigma.$
	\end{lem}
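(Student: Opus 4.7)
The plan is to prove the substantive first claim — that for $V$ de Rham at $\sigma$ of Hodge--Tate weight $0$ an extension class $X \in H^1(G_K,V)$ is de Rham at $\sigma$ iff it is Hodge--Tate at $\sigma$ — and then obtain the identification of kernels by combining this with the example immediately preceding the lemma.

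View $X$ as a short exact sequence $0 \to V \to X \to E \to 0$, where $E$ carries the trivial action. The implication ``de Rham at $\sigma$ $\Rightarrow$ Hodge--Tate at $\sigma$'' is standard (pass from $\Bdr$ to its graded). For the converse, suppose $X$ is Hodge--Tate at $\sigma$. Both $V$ and the trivial quotient $E$ have all Hodge--Tate weights equal to $0$ at $\sigma$, hence so does $X$, which means $\CC_p \otimes_{K,\sigma} X \cong \CC_p^{d}$ with $d := \dim_K X$. Set $W := \Bdr^+ \otimes_{K,\sigma} X$. This is a free $\Bdr^+$-module of rank $d$ with continuous $G_K$-action satisfying $W/tW \cong \CC_p^d$, so Lemma~\ref{lem:htW0derham} applies and yields an isomorphism $\Bdr^+ \otimes_K H^0(G_K,W) \xrightarrow{\sim} W$. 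Extending scalars to $\Bdr$ shows that $X$ is $\Bdr^+$-admissible at $\sigma$, in particular de Rham at $\sigma$.

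For the ``in particular'' statement, unravel the definition of $H^1_{g,\sigma}(V)$ as the kernel of the map sending a class to the pushed-out extension with $\Bdr \otimes_{K,\sigma} -$ coefficients; this kernel consists exactly of those $X$ for which $\Bdr \otimes_{K,\sigma} X$ is a trivial extension, i.e., those $X$ that are de Rham at $\sigma$. On the other hand, the example preceding the lemma (applied with our $V$, which is Hodge--Tate at $\sigma$) states that $\nabla_\sigma X = 0$ iff $X$ is Hodge--Tate at $\sigma$. Combining these two descriptions with the equivalence just proved identifies $\ker(\nabla_\sigma)$ with $H^1_{g,\sigma}(V)$.

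The only nontrivial ingredient is Lemma~\ref{lem:htW0derham}, whose proof uses Tate's vanishing $H^i(G_K,\CC_p(1)) = 0$ for $i=0,1$ to bootstrap triviality modulo $t$ to triviality modulo $t^n$ and then to pass to the limit via Nakayama. With that in hand the present lemma is essentially bookkeeping; the only point to keep an eye on is that ``de Rham at $\sigma$'' and ``Hodge--Tate at $\sigma$'' are conditions on the $\sigma$-component after base change along $K \xrightarrow{\sigma} \Bdr^+$, so that the splitting of the pushed-out extension is tested component-wise and matches the kernel appearing in the definition of $H^1_{g,\sigma}(V)$.
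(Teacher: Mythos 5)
Your proof follows the same route as the paper's: reduce the converse direction to showing that a Hodge--Tate class $X$ has all $\sigma$-Hodge--Tate weights equal to $0$, then apply Lemma~\ref{lem:htW0derham} to $\Bdr^+\otimes_{K,\sigma}X$; the handling of the ``in particular'' via the example preceding the lemma also matches what the paper does implicitly. The one place you are too quick is the sentence ``Both $V$ and the trivial quotient $E$ have all Hodge--Tate weights equal to $0$ at $\sigma$, hence so does $X$.'' That ``hence'' is precisely where the paper spends its effort: a priori the Hodge--Tate decomposition of $X$ at $\sigma$ could contain a summand $\CC_p(d)$ with $d\neq 0$, and ruling this out needs an argument. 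The paper's argument: if $\CC_p\otimes X\cong\CC_p(d)\oplus W$ with $d\neq 0$, then since $\Hom_{\CC_p[G_K]}(\CC_p,\CC_p(d))=H^0(G_K,\CC_p(d))=0$ the image of $\CC_p\otimes V$ lands in $W$, so $\CC_p(d)$ injects into $\CC_p\otimes(X/V)\cong\CC_p$, contradicting $H^0(G_K,\CC_p(-d))=0$. Alternatively you could invoke additivity of Sen weights in short exact sequences. Either justification is fine, but one of them must be supplied; with that added, your proof is complete and identical in substance to the paper's.
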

	\begin{proof}
		
		Since de Rham implies Hodge--Tate one implication is trivial. Let us assume that $X$ is Hodge--Tate at $\sigma.$ For symmetry reasons we can assume $\sigma=\id.$ First observe, that the only Hodge--Tate weight of $X$ is $0.$ Indeed, $\Hom_{\CC_p}(\CC_p,\CC_p(d)) \cong \CC_p(d)$ and recall $H^0(G_K,\CC_p(d))=0$ for $d\neq 0$ (cf. \cite[Corollary 3.56]{Fontaine}). If $X$ had a Hodge--Tate weight $d \neq 0,$ then we could write $\CC_p \otimes_E X= \CC_p(d)  \oplus W$  and $\operatorname{im}(\CC_p \otimes_E V \to \CC_p \otimes_E X) \subset W$ by the above reasoning. But $\CC_p \otimes_E X/V \cong \CC_p  \ncong \CC_p(d).$ Now we can apply Lemma \ref{lem:htW0derham} to $\Bdr^+ \otimes_E X$ to conclude $\dim_KH^0(G_K,\Bdr \otimes_E X) \geq \dim_KH^0(G_K,\Bdr^+ \otimes_E X)= \dim_K(H^0(G_K,\CC_p\otimes_E X))=\dim_E(X),$ which means that $X$ is de Rham at $\sigma=\id.$
	\end{proof}
	By Lemma \ref{lem:deRhamselmer} we have $H^1_{g,\Sigma_K\setminus \{\id\}}(G_K,W) = H^1_{an}(G_K,V)$ if $W$ is the $E$-$B$-pair attached to an analytic de Rham representation $V \in \operatorname{Rep}_E(G_K).$
	\section{Selmer groups as delta functors}
	\subsection{Euler--Poincaré formula}
	Let us abbreviate $H^i(W):= H^i(G_K,W)$ (similarly for $H^1_{g,\Sigma}).$
	\begin{lem}
		\label{lem:EPCgeneral}
		Suppose $B$ is positive for all $\sigma \in J \subset \Sigma_K$ and choose $\delta=\delta_{\kappa}$ as in Remark \ref{rem:sesWdelta}. 
		Then 
		$$-([K:\QQ_p]-\# J)r  = \dim_EH^0(W)  -\dim_EH^1_{g,J}(W) + \dim_E H^2(W(\delta)) .$$
		
	\end{lem}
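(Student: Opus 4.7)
The approach combines the four-term exact sequence from Remark \ref{rem:sesWdelta} with the classical Euler--Poincaré formula for Galois cohomology of $B$-pairs (Theorem \ref{thm:BpairEulerTate}, part 2) applied to the twisted $B$-pair $W(\delta)$. Conceptually, the sequence in Remark \ref{rem:sesWdelta} lets us trade $\dim H^1_{g,J}(W)$ against $\dim H^1(W(\delta))$ plus correction terms of dimension $r=\operatorname{rank}(W)$ coming from the positivity hypothesis, while the Euler--Poincaré formula for $W(\delta)$ lets us trade $\dim H^1(W(\delta))$ against $\dim H^2(W(\delta))$ plus a term of size $[K:\QQ_p]\,r$. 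Cancelling produces the claimed formula.

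In more detail, the first step is to take alternating dimensions in the exact sequence
$$0 \to H^0(W) \to \bigoplus_{\sigma \in J} H^0(G_K,W^+_{\mathrm{dR},\sigma}) \to H^1(W(\delta)) \to H^1_{g,J}(W) \to 0.$$
Since $W$ is positive at every $\sigma \in J$, the definition gives $\dim_E H^0(G_K,W^+_{\mathrm{dR},\sigma}) = r$ for each such $\sigma$, and hence
$$\dim_E H^0(W) - \dim_E H^1_{g,J}(W) = (\#J)\cdot r - \dim_E H^1(W(\delta)).$$

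The second step is to apply Theorem \ref{thm:BpairEulerTate} to $W(\delta)$, which has the same rank as $W$, to obtain
$$\dim_E H^0(W(\delta)) - \dim_E H^1(W(\delta)) + \dim_E H^2(W(\delta)) = -[K:\QQ_p]\,r.$$
The choice of $\kappa_\sigma$ enforces $H^0(G_K,t^{\kappa_\sigma}W^+_{\mathrm{dR},\sigma}) = 0$ for each $\sigma \in J$; since any Galois-invariant element of $W(\delta)$ has $\sigma$-component in $t^{\kappa_\sigma}W^+_{\mathrm{dR},\sigma}$ (using $G_K$-stability of the decomposition $W_{\mathrm{dR}} = \prod_{\sigma\in\Sigma_K} W_{\mathrm{dR},\sigma}$ recalled after Definition \ref{def:bpair}), this component must vanish, and one concludes $H^0(W(\delta)) = 0$. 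This is also compatible with the fact that the four-term sequence of Remark \ref{rem:sesWdelta} begins with $H^0(W)$ rather than with $H^0(W(\delta))$.

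Combining the two identities by adding $\dim_E H^2(W(\delta))$ to the first and substituting the second produces
$$\dim_E H^0(W) - \dim_E H^1_{g,J}(W) + \dim_E H^2(W(\delta)) = (\#J)\cdot r - [K:\QQ_p]\,r = -([K:\QQ_p] - \#J)\,r,$$
as required. There is no real obstacle here: the proof is essentially a dimension count, and the only nontrivial inputs are Theorem \ref{thm:BpairEulerTate} together with Remark \ref{rem:sesWdelta}, both of which are available. The point of the lemma is merely to organise these ingredients into a partial Euler--Poincaré identity compatible with the later full statement in Theorem \ref{thm:mainresult}, where one will need to further identify $\dim_E H^2(W(\delta))$ with $\dim_E H^2_{g,J}(W)$ for the appropriate dualising object.
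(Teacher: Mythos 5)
Your proposal is correct and follows essentially the same route as the paper: take alternating dimensions in the four-term exact sequence of Remark \ref{rem:sesWdelta} (using positivity to identify the middle term with $(E^r)^{J}$), apply the Euler--Poincaré formula of Theorem \ref{thm:BpairEulerTate} to $W(\delta)$, and use $H^0(W(\delta))=0$ from the choice of $\kappa_\sigma$. The extra justification you give for $H^0(W(\delta))=0$ is at the same level of detail as the paper's own remark and is already implicit in the cited exact sequence, so nothing further is needed.
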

	\begin{proof}
		By positivity the second term in the short exact sequence of  Remark \ref{rem:sesWdelta} is isomorphic to $[E^{r}]^{J}$ and according to the Euler--Poincaré Formula (cf. Theorem \ref{thm:BpairEulerTate}) the dimension of $H^1(W(\delta))$ is given by $$ [K:\QQ_p]r+\dim_E(H^0(W(\delta)))+\dim_EH^2(W(\delta)).$$ The character $\delta$ is chosen in a way which ensures the vanishing of $H^0(W(\delta)).$ 
		Furthermore from the exact sequence
		\begin{align*}\dim_E H^1_{g,J}&=\dim_E H^1(W(\delta))-[\#J r-\dim_E H^0(W)]\\
			&= ([K:\QQ_p]-\#J)r + \dim_E H^0(W)+\dim_E H^2(W(\delta)).\end{align*}
	\end{proof}
	
	\begin{cor}	Let $0 \neq W$  be positive at all $\sigma \in \Sigma_K.$ Then for any exhaustion 
		$$J_0=\emptyset \subsetneq J_1 \dots \subsetneq J_d= \Sigma_K$$ 
		we get a flag
		$$H^1(W)\supsetneq H^1_{g,J_1}(W)\supsetneq \dots \supsetneq H^1_{g}(W).$$
	\end{cor}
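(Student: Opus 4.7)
My plan is to upgrade each of the containments, which are automatic from the definitions of $H^1_{g,J}(W)$, to strict containments by comparing dimensions via the Euler--Poincaré formula of Lemma~\ref{lem:EPCgeneral}. The crucial input will be that the $H^2$-correction term in that formula is insensitive to enlarging $J$ by a single element, once the twisting characters are set up compatibly.

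First I would fix once and for all a tuple $(\kappa_\sigma)_{\sigma\in\Sigma_K}$ of positive integers, each chosen large enough to guarantee $H^0(G_K,t^{\kappa_\sigma}W^+_{\mathrm{dR},\sigma})=0$; positivity of $W$ at every embedding makes this possible, e.g.\ by taking $\kappa_\sigma$ strictly greater than the largest Hodge--Tate weight of $W$ at $\sigma$. For each $i$ I then set $\delta_i:=\prod_{\sigma\in J_i}\sigma^{\kappa_\sigma}$, so that all the $\delta_i$ come from restrictions of a single coherent family of exponents. Lemma~\ref{lem:EPCgeneral} applies uniformly to each $J_i$ and yields
\[
\dim_E H^1_{g,J_i}(W)=([K:\QQ_p]-i)\operatorname{rank}(W)+\dim_E H^0(W)+\dim_E H^2(W(\delta_i)).
\]

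The heart of the argument is then to show that $\dim_E H^2(W(\delta_i))$ is independent of $i$. Writing $J_{i+1}\setminus J_i=\{\sigma_{i+1}\}$, I would rerun the construction of Remark~\ref{rem:sesWdelta}, but now comparing $\delta_{i+1}$ to $\delta_i$ in place of the trivial character. Because the only place where the two $\Bdr^+$-lattices differ is at $\sigma_{i+1}$, this produces a short exact sequence of complexes
\[
0\to C^\bullet(W(\delta_{i+1}))\to C^\bullet(W(\delta_i))\to W^+_{\mathrm{dR},\sigma_{i+1}}/t^{\kappa_{\sigma_{i+1}}}W^+_{\mathrm{dR},\sigma_{i+1}}[0]\to 0,
\]
whose third term is concentrated in degree $0$. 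The associated long exact sequence then terminates after $H^1$ on the right and supplies, essentially for free, an isomorphism $H^2(W(\delta_{i+1}))\xrightarrow{\sim} H^2(W(\delta_i))$.

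Plugging this back into the dimension formula gives $\dim_E H^1_{g,J_i}(W)-\dim_E H^1_{g,J_{i+1}}(W)=\operatorname{rank}(W)>0$, since $W\neq 0$, which is exactly the strictness that the flag demands. I do not expect any serious obstacle in this argument: the only real point of care is the coherent choice of the family $(\kappa_\sigma)_{\sigma\in\Sigma_K}$ at the outset so that the various instances of Lemma~\ref{lem:EPCgeneral} and the telescoping short exact sequences of complexes fit together; positivity of $W$ at every embedding handles this.
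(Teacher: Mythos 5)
Your overall strategy is the same as the paper's: reduce strictness of the flag to a dimension count via Lemma~\ref{lem:EPCgeneral}, which requires showing that the correction term $\dim_E H^2(W(\delta_i))$ does not change when $J_i$ is enlarged to $J_{i+1}$. You have correctly isolated this as the crux (the paper's own proof silently assumes it when asserting that each graded piece has dimension $\operatorname{rank}(W)$), and your telescoping short exact sequence of complexes is the right way to set up the comparison. The gap is in the final step. From
$$0\to C^\bullet(W(\delta_{i+1}))\to C^\bullet(W(\delta_i))\to Q[0]\to 0,\qquad Q:=W^+_{\mathrm{dR},\sigma_{i+1}}/t^{\kappa_{\sigma_{i+1}}}W^+_{\mathrm{dR},\sigma_{i+1}},$$
the long exact sequence does \emph{not} ``terminate after $H^1$'': a complex concentrated in degree $0$ still contributes $H^0(G_K,Q)$, $H^1(G_K,Q)$ and $H^2(G_K,Q)$. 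What is true is that $H^2(G_K,Q)=0$, since $Q$ is a successive extension of $\CC_p$-representations; this yields only \emph{surjectivity} of $H^2(W(\delta_{i+1}))\to H^2(W(\delta_i))$. Injectivity is equivalent to surjectivity of $H^1(W(\delta_i))\to H^1(G_K,Q)$, and since $W$ is positive the graded pieces of $Q$ may have Hodge--Tate weight $0$, so $H^1(G_K,Q)$ is in general nonzero and this is a genuine condition, not a formality.

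In fact the needed isomorphism fails. Take $K=E=\QQ_p$ and $W=B(\abs{x})=(\fB_e\otimes\QQ_p(1),\,t^{-1}\Bdr^+\otimes\QQ_p(1))$, which is de Rham and positive of Hodge--Tate weight $0$, with $\kappa=1$, so $\delta_1=x$ and $W(\delta_1)=B(\abs{x}x)=B(\chi_{cyc})$. Then $H^2(W(\delta_1))\cong E$ by Theorem~\ref{thm:BpairEulerTate}, whereas $H^2(W)\cong H^0(B(x))^*=0$ by Tate duality and the rank-one computation of $H^0$; the surjection $H^2(W(\delta_1))\to H^2(W)$ has a one-dimensional kernel. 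Feeding this into Lemma~\ref{lem:EPCgeneral} gives $\dim_EH^1(W)=1+0+0=1$ and $\dim_EH^1_{g,\{\id\}}(W)=0+0+1=1$, so $H^1_{g,\{\id\}}(W)=H^1(W)$ and the flag is not strict: the unique nonsplit extension of $B_{\QQ_p}$ by $B(\abs{x})$ is semistable with nontrivial monodromy, hence de Rham. So the step you flagged as ``essentially for free'' is where the argument breaks, and it cannot be repaired without an extra hypothesis on $W$ (for instance étaleness: if $W=W(V)$ for a positive de Rham representation $V$, weak admissibility forces $D_{cris}(V^*(1))^{\varphi=1}=0$ and hence $H^2(W(\delta_{J}))=0$ for every $J$, which is exactly the independence you wanted). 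The same issue affects the paper's one-line proof.
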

	\begin{proof}
		The chain $H^1_{g,J_i}(W)$ is indeed strictly descending. To see this note that by  Lemma \ref{lem:EPCgeneral} the dimension of each graded piece $H^1_{g,J_i}(W)/H^1_{g,J_{i+1}}(W)$ is $\operatorname{rank}(W) \neq 0.$
	\end{proof}
	
	Observe that the character $$\operatorname{Norm}_{K/\QQ_p}(-)\abs{\operatorname{Norm}_{K/\QQ_p}(-)}_{p} \colon K^\times \to E^\times$$ corresponds by local class field theory to the cyclotomic character $\chi_{cyc}$ (with $\abs{p}_p = 1/p$).
	
	\begin{defn} For an embedding $\sigma$ we define the character $\chi_{C,\sigma}:= \sigma(x)\abs{x}.$ We simply write $\chi_{C}$ for $\chi_{C,\id}.$ 
	\end{defn}
	For $K=\QQ_p$ we have $\chi_{C} = \chi_{cyc}.$ The slope of $\chi_{C,\sigma}$ is $\operatorname{val}_{\pi_K}(\sigma(\pi_K)\abs{\pi_K}) = -([K:\QQ_p]-1).$%
	\begin{rem}(Euler--Poincaré formula for analytic cohomology)
		Suppose $W$ is de Rham of weight $0$ for all $\sigma \in \Sigma_K \setminus\{\id\}.$ Then in the situation of Lemma \ref{lem:EPCgeneral} we have 
		$\dim_E H^2(W(\delta)) = \dim_E H^0(W^*(\chi_{C}))$ and 
		$$	-r = \dim_E H^0(W) -\dim_E H^1_{an}(W) + \dim_E H^0(W^*(\chi_{C})).$$
	\end{rem}
	\begin{proof}
		We can take $\delta = (0,1,\dots,1)$ with the $0$ at $\sigma=\id$ and thus $W(\delta)^*(\chi_{cyc}) = W^*(x/\abs{x}) = W^*(\chi_{C})$ and by Tate duality $$\dim H^2(W(\delta)) = \dim H^0(W(\delta)^*(1)) = \dim H^0(W^*(\chi_{C})).$$
	\end{proof}
	
	\subsection{Main results}
	The calculations of Lemma \ref{lem:EPCgeneral} hint at the fact that the groups $H^0(W), H^1_{g,J}(W), H^2(W(\delta))$ give a reasonable cohomology theory for $B$-pairs with non-positive\footnote{Confusingly, a representation with non-positive Hodge--Tate weights is called positive (because the de Rham filtration jumps are in positive degrees).} Hodge--Tate weights at $\sigma \in J$ and that $\chi_D=\chi_{cyc}/\delta$ could be a reasonable ``dualising object''. We will make this precise below. At first glance it seems surprising that we have (certain) liberties with respect to the choice of $\delta.$ This is (essentially) due to the following Lemma:
	\begin{lem}
		Let $W=B(\rho)$ be the rank one $B$-pair attached to $\rho \colon K^\times \to E^\times.$ Then $H^0(B)\cong E$ if and only if $\rho = \prod_{\sigma}\sigma^{\kappa_\sigma}$ with all $\kappa_\sigma \leq 0.$ Otherwise $H^0(W)=0.$
	\end{lem}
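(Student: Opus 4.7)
The plan is to treat the two directions separately, both of which reduce to the explicit identification provided by Lemma \ref{lem:deltatwist}. For the ``if'' direction, assuming $\rho = \prod_\sigma \sigma^{k_\sigma}$ with all $k_\sigma \le 0$, I would apply Lemma \ref{lem:deltatwist} to write $W(\rho) = (E \otimes_{\QQ_p}\fB_e, \bigoplus_\sigma t^{k_\sigma}\Bdr^+)$ and observe that the element $1 \in E\otimes\fB_e$ is $G_K$-invariant and lies in every factor $t^{k_\sigma}\Bdr^+$ since $k_\sigma \le 0$ forces $t^{k_\sigma}\Bdr^+ \supseteq \Bdr^+ \ni 1$. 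This produces a canonical non-zero class in $H^0(W(\rho))$. Combining with the upper bound $H^0(W(\rho)) \hookrightarrow (W_e)^{G_K} = (E\otimes\fB_e)^{G_K} = E$ yields $H^0(W(\rho)) \cong E$.

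For the converse, I would start from a non-zero element of $H^0(W(\rho))$, which corresponds to a non-zero morphism $\iota \colon B_E \to W(\rho)$ of rank one $B$-pairs. The first sub-step is to show that $\iota_e$ is an isomorphism on the $W_e$-component; this uses additivity of rank in short exact sequences of $B$-pairs, together with the $G_K$-equivariance, to rule out a proper sub-image. Inverting $t$ then identifies $W_{\mathrm{dR}}(\rho)$ with $E\otimes\Bdr$ as $G_K$-equivariant $E\otimes\Bdr$-modules, compatibly with the natural action on the right.

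Under this identification, $W_{\mathrm{dR}}^+(\rho)$ is a $G_K$-stable $E\otimes\Bdr^+$-lattice in $E\otimes\Bdr$ containing $E\otimes\Bdr^+$. Decomposing along $\Sigma_K$, each $\sigma$-component $W_{\mathrm{dR},\sigma}^+(\rho)$ is a $G_K$-stable lattice in $E\otimes_{K,\sigma}\Bdr$ containing $E\otimes_{K,\sigma}\Bdr^+$. Since $E/K$ is Galois (as $E$ contains a normal closure), $G_K$ acts transitively through $\Gal(E/K)$ on the embeddings $\tau\colon E\to\Bdr^+$ extending $\sigma$, while simultaneously $g\in G_K$ acts on $t$ by multiplication by the unit $\chi_{\mathrm{cyc}}(g) \in (\Bdr^+)^\times$. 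These two compatibilities force the lattice to be of the form $t^{k_\sigma}(E\otimes_{K,\sigma}\Bdr^+)$ for a single integer $k_\sigma \le 0$. Applying Lemma \ref{lem:deltatwist} in the reverse direction, $W(\rho) \cong W(\prod_\sigma \sigma^{k_\sigma})$, and uniqueness of the classification of rank one $B$-pairs by continuous characters $K^\times \to E^\times$ forces $\rho = \prod_\sigma \sigma^{k_\sigma}$ as desired; the contrapositive gives $H^0(W(\rho)) = 0$ in every remaining case.

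The main obstacle will be the rank-additivity argument in the second paragraph. Since $E \otimes_{\QQ_p}\fB_e$ is not an integral domain in general, a non-zero $E\otimes\fB_e$-linear map between rank one modules is not automatically injective, let alone surjective, and one has to combine $G_K$-equivariance of $\iota_e$ with the behaviour of rank in the abelian-like structure of $B$-pairs to exclude partial-rank images. Once this step is secured, the remainder of the proof is a direct computation from Lemma \ref{lem:deltatwist} and the transitivity of the Galois action on the embeddings of $E$.
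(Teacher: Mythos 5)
The paper does not actually prove this lemma; it cites \cite[Proposition 2.14]{nakamura2009classification}, so you are attempting a direct argument. Your ``if'' direction is complete and correct: with all $\kappa_\sigma\le 0$, Lemma \ref{lem:deltatwist} gives $\Bdr^+\otimes_{K,\sigma}E\subseteq t^{\kappa_\sigma}\Bdr^+\otimes_{K,\sigma}E$, so $1$ is a $G_K$-invariant element of $W_e\cap W_{\mathrm{dR}}^+$, and the bound $H^0(W)\subseteq W_e^{G_K}=(E\otimes_{\QQ_p}\fB_e)^{G_K}=E$ pins down the dimension. The lattice classification in your third paragraph is also sound: a $G_K$-stable $\Bdr^+\otimes_{K,\sigma}E$-lattice decomposes as $\prod_\tau t^{k_\tau}\Bdr^+$ over the embeddings $\tau\colon E\to\Bdr^+$ above $\sigma$, the exponents are constant on $G_K$-orbits because $g(t)=\chi_{cyc}(g)t$ is a unit multiple of $t$, and $G_K$ acts transitively on these $\tau$; containing $\Bdr^+\otimes_{K,\sigma}E$ then forces $k_\sigma\le 0$.

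The genuine gap is the step you yourself flag, namely that $\iota_e\colon E\otimes_{\QQ_p}\fB_e\to W_e$ is an isomorphism, and the tool you propose for it cannot work. First, a correction: $E\otimes_{\QQ_p}\fB_e$ \emph{is} an integral domain (it is the coordinate ring of the complement of the fibre over $\infty$ in the connected Fargues--Fontaine curve attached to $E$, a Dedekind domain), so injectivity of a non-zero map of rank one modules is automatic. The issue is surjectivity, and ``additivity of rank'' is blind to it: the cokernel of an injective map of rank one modules over a Dedekind domain is torsion, hence of rank zero, so rank bookkeeping is consistent with any proper image. What is actually needed is that $E\otimes_{\QQ_p}\fB_e$ has no non-zero proper $G_K$-stable ideal --- equivalently, that no non-empty finite $G_K$-stable set of closed points of the curve avoids the fibre over $\infty$. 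This is exactly the non-formal input behind Nakamura's Proposition 2.14 (via his structure results on rank one $\fB_e$-representations; it can also be extracted from the Fargues--Fontaine classification of $G_K$-orbits of closed points, or, on the $(\varphi,\Gamma)$-side, from the fact that the cokernel of a non-zero map of rank one modules over the Robba ring is supported on the zeros of $t$, which all lie over $\infty$). Without this, the converse does not close: a priori $W_e$ could be a $G_K$-equivariant modification of $E\otimes_{\QQ_p}\fB_e$ along a Galois-stable divisor away from $\infty$, and $W(\rho)$ would then not be of the form $W(\prod_\sigma\sigma^{k_\sigma})$. Once that lemma on $\fB_e$ is supplied, the remaining steps (identification of $W_{\mathrm{dR}}$, the lattice classification, and injectivity of $\delta\mapsto W(\delta)$ from \cite[Theorem 1.45]{nakamura2009classification}) do assemble into a correct proof.
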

	\begin{proof}
		See \cite[Proposition 2.14]{nakamura2009classification}.
	\end{proof}
	The conditions put on $\delta$ hence ensure by duality that we have $H^2(B(\chi_D))\cong H^2(B(\chi_D\delta)) = H^2(B\chi_{cyc})$. 
	
	\begin{lem}
		\label{lem:Hodgetateacyclic}
		Let $X$ be a finitely generated $\Bdr^+$-representation of $G_K,$ for $j \in \NN_0$ let $F^j := t^{j}X$ and suppose that the $\CC_p$-representations $F^j/F^{j+1}$ are Hodge--Tate and do not admit $0$ as a Hodge--Tate weight. Then 
		$$H^n(G_K,X)=0$$
		for every $n\in \NN_0.$
	\end{lem}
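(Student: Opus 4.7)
The plan is to use the $t$-adic filtration on $X$ to reduce to the known vanishing of the continuous Galois cohomology of Hodge--Tate $\CC_p$-representations whose weights avoid $0$, and then pass to a limit to cover the possibly non-torsion part of $X$ over $\Bdr^+$.

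\textbf{Step 1 (torsion case).} I would first assume $t^N X = 0$ for some $N$, so that the filtration is finite with $F^N = 0$. Working by descending induction on $j$, I would prove $H^n(G_K, F^j) = 0$ for all $n \geq 0$, the base case $j = N$ being trivial. For the inductive step, the long exact sequence of continuous cohomology attached to
\[
0 \to F^{j+1} \to F^j \to F^j/F^{j+1} \to 0
\]
reduces the claim to showing $H^n(G_K, F^j/F^{j+1}) = 0$. By hypothesis $F^j/F^{j+1}$ is a Hodge--Tate $\CC_p[G_K]$-module admitting no $0$-weight, hence decomposes as a finite direct sum of twists $\CC_p(d_i)$ with $d_i \neq 0$; the Tate--Sen vanishing $H^n(G_K, \CC_p(d)) = 0$ for $d \neq 0$ and all $n$ (extending the $n \leq 1$ statement used in \cite{iovita1999galois}, see e.g.\ \cite[Chapter~3]{Fontaine}) then closes the induction.

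\textbf{Step 2 (general case).} For arbitrary finitely generated $X$, I would set $X_j := X/t^j X$. Each $X_j$ is $t^j$-torsion and inherits the hypotheses: its filtration $F^i X_j = F^i X/F^j X$ (for $i < j$) has graded pieces identical to those of $X$. By Step~1, $H^n(G_K, X_j) = 0$ for all $n, j$. Since $X$ is finitely generated over the complete local ring $\Bdr^+$ with maximal ideal $(t)$, Krull's intersection theorem gives $\bigcap_j F^j X = 0$, and hence $X = \varprojlim_j X_j$ in its $t$-adic topology with surjective transition maps. The Milnor short exact sequence for continuous cohomology
\[
0 \to {R^1\!\varprojlim}_j H^{n-1}(G_K, X_j) \to H^n(G_K, X) \to \varprojlim_j H^n(G_K, X_j) \to 0
\]
then yields $H^n(G_K, X) = 0$ from the vanishing of both outer terms.

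\textbf{Main obstacle.} The $p$-adic Hodge-theoretic input — Tate--Sen vanishing in all cohomological degrees — is standard. The more delicate point is the legitimacy of the Milnor sequence for the continuous cohomology of $X$ written as the $t$-adic inverse limit of its quotients; this is exactly where the finite generation of $X$ over $\Bdr^+$ is really used, via the induced Fréchet topology and the surjectivity of the transition maps $X_{j+1}\twoheadrightarrow X_j$, which together force $R^1\varprojlim$ to vanish on the level of the cohomology system.
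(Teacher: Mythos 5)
Your proposal is correct and follows essentially the same route as the paper: dévissage along the $t$-adic filtration, the Tate--Sen/Iovita--Zarhin vanishing $H^n(G_K,\CC_p(d))=0$ for $d\neq 0$ in all degrees applied to the graded pieces, and passage to the limit $X=\varprojlim_j X/t^jX$. Your Step 2 merely makes explicit (via the Milnor sequence and the surjectivity of the transition maps) the limit step that the paper's proof leaves as "passage to the limit", which is a welcome precision but not a different argument.
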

	\begin{proof}
		If $i\neq 0$ then by \cite[Proposition 3.2]{iovita1999galois} we have $H^n(G_K,\CC_p(i))= 0$ for all $n \in \NN_0$ and thus 
		$H^n(G_K,X/F^1) = 0$ for all $n.$
		Using the short exact sequences 
		$$0 \to F^j/F^{j+1}\to X/F^{j+1} \to X/F^j \to 0$$
		we deduce by induction on $j \geq 1$ that $H^n(G_K,X/F^{j})=0$ for every $j.$
		Passage to the limit yields the claim. 
	\end{proof}

	The category of $B$-pairs is not abelian, it is however an exact category (with the obvious class of short exact sequences). We subsequently use the word $\delta$-functor to mean a family of functors $H^n(-) \colon\mathcal{E} \to \mathcal{A}$ indexed by $n \in \NN_0$ from an exact category $\mathcal{E}$ into an abelian category $\mathcal{A}$, taking short exact sequences to long exact sequences in cohomology and such that the connecting homomorphisms are natural.

	\begin{thm} \label{thm:deltafunktor}
		Let $0 \to W_1 \to W_2 \to W_3 \to 0$ be a short exact sequence of $E$-$B$-pairs. 
		Assume the $W_i$ are $\sigma$-de Rham for $\sigma \in J$ and let $\delta$ be as in Remark \ref{rem:sesWdelta}. 
		Let $\partial^1_{W_3(\delta)}:H^1(W_3(\delta))\to H^2(W_1(\delta))$ be the connecting homomorphism. 
		Suppose each $W_i$ is positive with Hodge--Tate weights in $[-n,0]$ at all $\sigma \in J.$
		Then 
		\begin{enumerate}
			\item The connecting homomorphism 
			$\partial^0 \colon H^0(W_3) \to H^1(W_1)$ takes values in $H^1_{g,J}(W_1).$
			\item The map 
			$$\partial^1 \colon H^1_{g,J}(W_3) \to H^2(W_1(\delta)),$$ given by $\partial^1(x):= \partial_{W(\delta)}^1(\tilde{x}),$ where $\tilde{x} \in H^1(W_3(\delta))$ is any preimage under the projection from Remark \ref{rem:sesWdelta} is well-defined.
			\item  Setting $H^0_{g,J}(W)=H^0(W)$ and $H^2_{g,J}(W):=H^2(W(\delta))$ (and $H^i_{g,J}(W)=0$ for $i\geq 3$) we obtain a $\delta$-functor from the category of $J$-de Rham $B$-pairs with non-positive Hodge--Tate weights to $E$-vector spaces.
		\end{enumerate}
	\end{thm}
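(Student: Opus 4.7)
The plan is to verify the three claims in order, each leveraging Remark \ref{rem:sesWdelta} and the naturality of connecting homomorphisms attached to it.

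For (1), I would use functoriality of the exact functor $W \mapsto W_{\mathrm{dR},\sigma}$ for each $\sigma \in J$. The strict short exact sequence $0 \to W_1 \to W_2 \to W_3 \to 0$ yields a short exact sequence $0 \to W_{1,\mathrm{dR},\sigma} \to W_{2,\mathrm{dR},\sigma} \to W_{3,\mathrm{dR},\sigma} \to 0$ of $\Bdr$-modules, and naturality places $\partial^0$ into a commutative square with the connecting map $\partial^0_{\mathrm{dR},\sigma}\colon H^0(W_{3,\mathrm{dR},\sigma}) \to H^1(W_{1,\mathrm{dR},\sigma})$. Writing $r_i = \operatorname{rank}(W_i)$, the $\sigma$-de Rham hypothesis makes each $H^0(W_{i,\mathrm{dR},\sigma})$ free of rank $r_i$ over $E$; the dimension count $r_2 = r_1 + r_3$ combined with left-exactness of $H^0$ then forces $H^0(W_{2,\mathrm{dR},\sigma}) \twoheadrightarrow H^0(W_{3,\mathrm{dR},\sigma})$, so $\partial^0_{\mathrm{dR},\sigma} = 0$ and the image of $\partial^0$ lies in $H^1_{g,J}(W_1)$.

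For (2), Remark \ref{rem:sesWdelta} identifies the kernel of the projection $H^1(W_3(\delta)) \twoheadrightarrow H^1_{g,J}(W_3)$ with the image of the connecting homomorphism
\[\alpha\colon \bigoplus_{\sigma \in J} H^0(G_K, W^+_{3,\mathrm{dR},\sigma}) \to H^1(W_3(\delta)),\]
so well-definedness of $\partial^1$ reduces to showing $\partial^1_{W(\delta)} \circ \alpha = 0$. I would place the situation inside the $3 \times 3$ diagram of short exact sequences whose columns are the three columns of Remark \ref{rem:sesWdelta} applied to $W_1, W_2, W_3$, whose middle row is the given $0 \to W_1 \to W_2 \to W_3 \to 0$, whose top row is its $\delta$-twist, and whose bottom row is the induced $0 \to \bigoplus_{\sigma \in J} W^+_{1,\mathrm{dR},\sigma}/t^{\kappa_\sigma} \to \bigoplus_{\sigma \in J} W^+_{2,\mathrm{dR},\sigma}/t^{\kappa_\sigma} \to \bigoplus_{\sigma \in J} W^+_{3,\mathrm{dR},\sigma}/t^{\kappa_\sigma} \to 0$ (exact by strictness). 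The standard commutativity of iterated connecting maps in such a diagram equates $\partial^1_{W(\delta)} \circ \alpha$ up to sign with a composite $\delta_{\mathrm{col}} \circ \partial^0_{\mathrm{row}}$ routed through the bottom row and then the first column. It therefore suffices to show that $\partial^0_{\mathrm{row}}$ vanishes on the image of $\bigoplus H^0(W^+_{3,\mathrm{dR},\sigma})$; this decomposes over $\sigma$. The choice $\kappa_\sigma > n_\sigma$ together with Lemma \ref{lem:Hodgetateacyclic} makes $t^{\kappa_\sigma} W^+_{i,\mathrm{dR},\sigma}$ acyclic, so $H^j(W^+_{i,\mathrm{dR},\sigma}/t^{\kappa_\sigma}) \cong H^j(W^+_{i,\mathrm{dR},\sigma})$ for all $j$; positivity gives $H^0(W^+_{i,\mathrm{dR},\sigma}) \cong E^{r_i}$, and the same dimension count as in (1) combined with strictness forces $H^0(W^+_{2,\mathrm{dR},\sigma}) \twoheadrightarrow H^0(W^+_{3,\mathrm{dR},\sigma})$, killing the connecting map.

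For (3), I would verify exactness of the candidate long sequence spot by spot. Exactness at $H^0(W_i)$ and at $H^2(W_2(\delta)), H^2(W_3(\delta))$ is immediate from the Galois long exact sequences for $0 \to W_i \to \cdots$ and its $\delta$-twist. Exactness at $H^1_{g,J}(W_1)$ and at $H^1_{g,J}(W_2)$ uses the injectivity of $H^1(W_{1,\mathrm{dR},\sigma}) \to H^1(W_{2,\mathrm{dR},\sigma})$ for $\sigma \in J$, itself a direct consequence of $\partial^0_{\mathrm{dR},\sigma} = 0$ from Part (1). Exactness at $H^1_{g,J}(W_3)$ and at $H^2(W_1(\delta))$ is obtained by lifting along the surjection $H^1(W_3(\delta)) \twoheadrightarrow H^1_{g,J}(W_3)$ and chasing the long exact sequence for the $\delta$-twist; naturality of the connecting maps is inherited from naturality of the Galois-cohomology connecting maps and that of Remark \ref{rem:sesWdelta}. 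The main obstacle I anticipate is the $3 \times 3$ commutativity underpinning Part (2); the remaining checks reduce to diagram chases on the long exact sequences associated to Remark \ref{rem:sesWdelta} and the original Galois-cohomology long exact sequence.
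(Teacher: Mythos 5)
Your proposal is correct, and its backbone --- Remark \ref{rem:sesWdelta}, the acyclicity from Lemma \ref{lem:Hodgetateacyclic} identifying $H^j(W^+_{i,\mathrm{dR},\sigma}/t^{\kappa_\sigma})$ with $H^j(W^+_{i,\mathrm{dR},\sigma})$, and the surjectivity of $H^0(W^+_{2,\mathrm{dR},\sigma})\to H^0(W^+_{3,\mathrm{dR},\sigma})$ obtained from positivity plus a rank count --- is exactly the backbone of the paper's proof. The difference is in how several individual steps are executed: your versions are uniformly homological where the paper's are partly geometric. For (1) the paper pulls back $x\colon B_E\to W_3$ to an extension inside $W_2$ and invokes the fact that being $\sigma$-de Rham passes to strict subobjects; you instead show $\partial^0_{\mathrm{dR},\sigma}=0$ from the surjectivity of $H^0(W_{2,\mathrm{dR},\sigma})\to H^0(W_{3,\mathrm{dR},\sigma})$ and conclude by naturality of $\nabla_\sigma$ with respect to connecting maps --- this trades the strict-subobject input for a (standard, but worth stating) compatibility of $\nabla_\sigma$ with the long exact sequences. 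For (2) the paper chases directly through the commutative square relating $f_2$ and $f_3$: an element of $\operatorname{im}(f_3)$ lifts to $H^1(W_2(\delta))$ and hence dies under $\partial^1_{W(\delta)}$; your route via the anticommutativity of iterated connecting homomorphisms in the $3\times3$ diagram is equivalent and uses the same key surjectivity, at the cost of invoking that (standard) lemma and of checking exactness of the bottom row, which you correctly attribute to strictness. For exactness at $H^1_{g,J}(W_2)$ the paper again argues with extensions and strict subobjects, whereas you use the injectivity of $H^1(W_{1,\mathrm{dR},\sigma})\to H^1(W_{2,\mathrm{dR},\sigma})$, which does follow from the vanishing of $\partial^0_{\mathrm{dR},\sigma}$ established in your part (1); this is a clean shortcut the paper does not take. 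The remaining exactness checks coincide with the paper's, so I see no gap.
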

	\begin{proof}
		For the first part, let $x \in H^0(W_3).$ This defines a morphism $x \colon B_E \to W_3.$ Pulling back the image of $x$ to $W_2$ defines an extension $0 \to W_1 \to X \to B_E \to 0$ which is equal to $\partial^0(x) \in H^1(W_1).$  Being a strict sub-object of $W_2$ it is $\sigma$-de Rham at all $\sigma \in J,$ i.e., it belongs to $H^1_{g,J}(W_1).$
		By Remark \ref{rem:sesWdelta} we have exact sequences of complexes 
		\begin{equation} \label{eq:shortying}
			0 \to C(W_i(\delta)) \to C(W_i) \to \prod_{\sigma \in J} {W_{i,\mathrm{dR},\sigma}}^+/t^{k_\sigma} {W_{i,\mathrm{dR},\sigma}}^+[0] \to 0.
		\end{equation}
		Consider the piece of the long exact sequence attached to the short exact sequence \eqref{eq:shortying} 
		
		\begin{equation}
			\label{eq:sequenceying} \prod H^0( W_{i,\mathrm{dR},\sigma}^+/t^{k_\sigma}W_{i,\mathrm{dR},\sigma}^+)  \xrightarrow{f_i} H^1(W_i(\delta))\xrightarrow{g_i}  H^1(W_i) \to \prod H^1( W_{i,\mathrm{dR},\sigma}^+/t^{k_\sigma}W_{i,\mathrm{dR},\sigma}^+)\end{equation}
		and the commutative diagram 
		% https://q.uiver.app/#q=WzAsNixbMCwwLCJIXjAoV197MyxcXG1hdGhybXtkUn19XisvdF57a19cXHNpZ21hfSkiXSxbMSwwLCJIXjAoV197MSxcXG1hdGhybXtkUn19XisvdF57a19cXHNpZ21hfSkiXSxbMSw0XSxbMCwxLCJIXjEoV18zKFxcZGVsdGEpKSJdLFsxLDEsIkheMShXXzEoXFxkZWx0YSkpIl0sWzIsMCwiMCJdLFszLDQsIlxccGFydGlhbCJdLFswLDEsIlxccGFydGlhbCJdLFswLDMsImZfMyJdLFsxLDQsImZfMSJdLFsxLDVdXQ==
		\begin{equation} \label{eq:dellcd}	\begin{tikzcd}
				{H^0(W_{2,\mathrm{dR},\sigma}^+/t^{k_\sigma})} & {H^0(W_{3,\mathrm{dR},\sigma}^+/t^{k_\sigma})} & 0 \\
				{H^1(W_2(\delta))} & {H^1(W_3(\delta))} \\
				\arrow[ from=1-1, to=1-2]
				\arrow["{f_2}", from=1-1, to=2-1]
				\arrow[from=1-2, to=1-3]
				\arrow["{f_3}", from=1-2, to=2-2]
				\arrow[, from=2-1, to=2-2]
			\end{tikzcd}\
		\end{equation}
		The surjectivity of the boundary map in the top row of \eqref{eq:dellcd} follows by applying Lemma \ref{lem:Hodgetateacyclic} to see $H^0( W_{i,\mathrm{dR},\sigma}^+/t^{k_\sigma}W_{i,\mathrm{dR},\sigma}^+) = H^0( W_{i,\mathrm{dR},\sigma}^+)$ and the surjectivity of $H^0( W_{2,\mathrm{dR},\sigma}^+)\to H^0( W_{3,\mathrm{dR},\sigma}^+),$ which is a consequence of the positivity hypothesis. 
		We prove that $\partial^1 \colon H^1_{g,J}(W_3) \to H^2(W_1(\delta))$ is well-defined. 
		Suppose $y \in H^1(W_3(\delta))$ is mapped to $0$ in $H^1(W_3)$ then it belongs to the image of $f_3$ but by \eqref{eq:dellcd} thus to the image of $H^1(W_2(\delta)),$ which lies in the kernel of $H^1(W_3(\delta)) \to H^2(W_1(\delta)).$ This shows that $\partial^1$ is well-defined. 
		To see exactness at the connecting homomorphism at $H^1_{g,J}(W_3) \to H^2(W_1(\delta)) \to H^2(W_2(\delta))$
		note that $\operatorname{Im}(\partial) \subset \ker(H^2(W_1(\delta))\to H^2(W_2(\delta))).$
		Suppose $x \in \ker(H^2(W_1(\delta))\to H^2(W_2(\delta))).$
		Then from the usual exact sequence of Galois cohomology for the $W_i(\delta)$ we can find $y \in H^1(W_3(\delta))$ such that $x= \partial y$ by the exactness of \eqref{eq:shortying} $g_3(y) $ belongs to $H^1_{g,J}(W_3)$ and, by construction, $\partial^1(g_3(y))=x$ thus proving 
		$\operatorname{Im}(H^1_{g,J}(W_3) \to H^2(W_1(\delta)))= \ker(H^2(W_1(\delta))) \to H^2(W_2(\delta)).$
		The exactness in degree $2$ follows from the fact that $W \mapsto (H^i(W(\delta)))_i$ is a delta functor. 
		The exactness in degree $0$ follows from the fact that $W \mapsto (H^i(W))_i$ is a delta functor. 
		The exactness at $H^0(W_2)\to H^0(W_3) \to H^1_{g,J}(W_1)$ is clear. 
		It remains to check exactness at $H^1_{g,J}(W_1)\to H^1_{g,J}(W_2) \to H^1_{g,J}(W_3).$ Suppose $X \in H^1_{g,J}(W_2)$ is an extension. First of all note that $X/W_1$ indeed belongs to $H^1_{g,J}(W_3)$ because the positivity is inherited by strict quotients. If $X/W_1$ is split, i.e, $X$ is mapped to zero in $H^1_{g,J}(W_3)$ then we find a preimage in $H^1(W_1)$ more precisely, the preimage in $W_2$ of the image of the section $B_E \to X/W_3$  to $X$ defines an extension $0 \to W_1 \to Z \to B_E \to 0$ which, by construction, is a strict sub object of $X$ and hence belongs to $H^1_{g,J}(W_1).$
	\end{proof}
	\begin{lem}
		\label{lem:deltamodification}
		Let $W$ be $\Bdr^+$-admissible at $\sigma$ with Hodge--Tate weights in $[-n,0].$ For $k \geq n+1$ define $\delta_k:= \sigma^{k}.$
		Then the natural maps
		$H^i(W(\delta_k)) \to H^i(W(\delta_{n+1}))$ are isomorphisms for every $i.$ 
	\end{lem}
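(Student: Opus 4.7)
The plan is to compare $W(\delta_k)$ and $W(\delta_{n+1})$ via the inclusion at the $\sigma$-component of the de Rham lattice and then identify the quotient as a Hodge--Tate acyclic $\Bdr^+$-representation. Since $k \geq n+1$, we have $t^k W_{\mathrm{dR},\sigma}^+ \subseteq t^{n+1}W_{\mathrm{dR},\sigma}^+$, which promotes to a morphism $W(\delta_k) \to W(\delta_{n+1})$ of $E$-$B$-pairs that is the identity on the $\fB_e$-side, on $W_{\mathrm{dR}}$, and on every component of $W_{\mathrm{dR}}^+$ except the $\sigma$-component, where it is the canonical inclusion. The induced morphism of complexes therefore fits into a short exact sequence
$$0 \to C^\bullet(W(\delta_k)) \to C^\bullet(W(\delta_{n+1})) \to X[0] \to 0,$$
where $X := t^{n+1}W_{\mathrm{dR},\sigma}^+/t^k W_{\mathrm{dR},\sigma}^+$ sits in degree zero (the degree-one term $W_{\mathrm{dR}}$ being unchanged by twisting by $\delta_k$). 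Thus it suffices to show $H^n(G_K, X) = 0$ for every $n$.

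For this I would apply Lemma \ref{lem:Hodgetateacyclic} with the filtration $F^j := t^j X$, which is exhaustive since $t^{k-n-1} X = 0$. Multiplication by $t^{n+1+j}$ provides a $G_K$-equivariant isomorphism between $F^j/F^{j+1} = t^{n+1+j}W_{\mathrm{dR},\sigma}^+/t^{n+2+j}W_{\mathrm{dR},\sigma}^+$ and the twist $(W_{\mathrm{dR},\sigma}^+/tW_{\mathrm{dR},\sigma}^+)(n+1+j)$ of $\CC_p$-representations. By the hypothesis that $W$ is de Rham at $\sigma$ with Hodge--Tate weights in $[-n,0]$, the $\CC_p$-representation $W_{\mathrm{dR},\sigma}^+/tW_{\mathrm{dR},\sigma}^+$ is itself Hodge--Tate with weights contained in $[-n,0]$, so $F^j/F^{j+1}$ has Hodge--Tate weights in $[j+1, n+1+j]$, a set disjoint from $\{0\}$ since $j \geq 0$. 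The hypotheses of Lemma \ref{lem:Hodgetateacyclic} are then met, yielding $H^n(G_K,X) = 0$ for all $n$, and the long exact cohomology sequence delivers the claimed isomorphisms $H^i(W(\delta_k)) \xrightarrow{\sim} H^i(W(\delta_{n+1}))$.

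The main subtlety I would verify carefully is the compatibility of the paper's ``Hodge--Tate weights in $[-n,0]$'' convention on $W$ at $\sigma$ with the Hodge--Tate structure of the $\CC_p$-quotient $W_{\mathrm{dR},\sigma}^+/tW_{\mathrm{dR},\sigma}^+$ in the correct sign normalisation (as illustrated by the earlier discussion of $\QQ_p(-1)$); concretely, one must confirm that the weight interval is preserved rather than negated. Once this identification is pinned down, the remainder of the argument is a mechanical bookkeeping of filtrations combined with an invocation of Lemma \ref{lem:Hodgetateacyclic}.
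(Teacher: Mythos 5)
Your argument is correct and follows the same route as the paper: the short exact sequence of complexes from Remark \ref{rem:sesWdelta} with quotient $t^{n+1}W_{\mathrm{dR},\sigma}^+/t^{k}W_{\mathrm{dR},\sigma}^+$ placed in degree zero, followed by the acyclicity of that quotient, which the paper deduces from $H^i(G_K,t^j\Bdr^+)=0$ for $j\geq 1$ and which you obtain equivalently by feeding the $t$-adic filtration into Lemma \ref{lem:Hodgetateacyclic}. Your sign bookkeeping is consistent with the paper's convention (under which $\QQ_p(-1)$ is positive of weight $-1$), so the graded pieces indeed have Hodge--Tate weights in $[j+1,n+1+j]$, avoiding $0$.
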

	\begin{proof}
		By Remark \ref{rem:sesWdelta} we get a short exact sequence of complexes 
		$$0 \to C^\bullet(W(\delta_k)) \to C^\bullet(W(\delta_{n+1})) \to t^{n+1}W_{\mathrm{dR},\sigma}^+/t^{k}W_{\mathrm{dR},\sigma}^+[0] \to 0.$$
		For $j \geq 1$ we have $H^i(G_K,t^j\Bdr^+)=0$ for  every $i$ from which we deduce that $H^i(G,  t^{n+1}W_{\mathrm{dR}}^+/t^{k}W_{\mathrm{dR}}^+)=0$ holds. 
	\end{proof}
	\begin{thm} 
		\label{thm:mainresult} Let $K/\QQ_p$ be finite, let $J\subseteq \Sigma_K= \Hom_{\QQ_p}(K,\Bdr^+),$ let $E\subseteq \Bdr^+ $ be a normal closure and let $\mathbf{n} \in \NN^J.$
		Let $\mathcal{BP}_E^{J,\mathbf{n}}$ be the full subcategory of the category of $E$-$B$-pairs $W$ with the  property that
		$W$ is de Rham with Hodge--Tate weights in $[-n_{\sigma},0]$ for all $\sigma \in J.$ Choose $\kappa_{\sigma}>n_\sigma$ for every $\sigma \in J$ and let  $\delta = [x \mapsto \prod_{\sigma \in J} \sigma(x)^{\kappa_\sigma}]$ viewed as a rank one object of $\mathcal{BP}_E.$ 
		Let $\chi_D:=\chi_{cyc}/\delta.$
		\begin{enumerate}[1.)]
			\item The group $H^1_{g,J}(W)$ agrees with $\operatorname{Ext}_{\mathcal{B}^{J,\mathbf{n}}_E}(B_E,W).$
			\item $(H^n_{g,J}(W))_{n \in \NN}$ is a delta functor. 
			\item $H^2_{g,J}(\chi_D)\cong H^2(\chi_{cyc})\cong E$ 
			\item We have the following Euler--Poincaré formula:
			$$-\operatorname{rank}(W)([K:\QQ_p]-\abs{J})=\sum_{i=0}^2 (-1)^i \dim_EH^i_{g,J}(W).$$ 
			\item For every $W \in \mathcal{BP}_E^{J,\mathbf{n}}$ we have $W^*(\chi_D) \in \mathcal{BP}_E^{J,\mathbf{n}}$ and the pairing
			$$H^i_{g,J}(W) \times H^{2-i}_{g,J}(W^*(\chi_D)) \to H^2_{g,J}(\chi_D)\cong E$$ is perfect. 
			
			\item For $J=\emptyset$ this specialises to Galois cohomology of $B$-pairs.  
			\item For $J = \Sigma_K \setminus \sigma_0$ and $\mathbf{n}= (0,\dots,0)$ this specialises to analytic cohomology, i.e., $H^i_{g,J}(W) \cong H^i_{an}(D(W)),$ where $D(W)$ is the analytic $(\varphi_K,\Gamma_K)$-module over $\cR_E$ attached to $W$ (with respect to the embedding $\sigma_0$).  
		\end{enumerate}
	\end{thm}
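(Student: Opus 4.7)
The plan is to reduce each assertion to an earlier result in the paper. First observe that setting $\kappa_{\sigma}=n_{\sigma}+1$ makes both $\chi_{D}$ and $W^{*}(\chi_{D})$ lie in $\mathcal{BP}^{J,\mathbf{n}}_{E}$: using Lemma~\ref{lem:deltatwist}, the weight of $W^{*}(\chi_{D})$ at $\sigma\in J$ falls in $[1-\kappa_{\sigma},\,n_{\sigma}+1-\kappa_{\sigma}]=[-n_{\sigma},0]$. By Lemma~\ref{lem:deltamodification} any larger choice $\kappa_{\sigma}>n_{\sigma}$ yields canonically isomorphic cohomology groups, so this is no loss of generality. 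With this normalisation, parts (2), (3), (4), (6) follow immediately: (6) is the tautology $\delta=1$ when $J=\emptyset$; (3) reads $H^{2}_{g,J}(\chi_{D})=H^{2}(\chi_{D}(\delta))=H^{2}(\chi_{cyc})\cong E$ by Theorem~\ref{thm:BpairEulerTate}; (2) is Theorem~\ref{thm:deltafunktor} applied inside the subcategory (the positivity hypothesis is automatic, since a de Rham $B$-pair with non-positive weights has $H^{0}(G_{K},W^{+}_{\mathrm{dR},\sigma})$ of full rank); and (4) rephrases Lemma~\ref{lem:EPCgeneral} using $H^{2}_{g,J}(W):=H^{2}(W(\delta))$.

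For (1), I identify $\mathrm{Ext}^{1}_{\mathcal{BP}^{J,\mathbf{n}}_{E}}(B_{E},W)$ with extensions $0\to W\to X\to B_{E}\to 0$ of $B$-pairs whose middle term lies in $\mathcal{BP}^{J,\mathbf{n}}_{E}$. The Hodge--Tate weight bound at $\sigma\in J$ passes automatically to $X$ from $W$ and $B_{E}$ via the associated filtration sequence, while the de Rham condition on $X$ at each $\sigma\in J$ is equivalent, via the long exact sequence for the $\sigma$-component, to the class in $H^{1}(W)$ mapping to zero in $H^{1}(W_{\mathrm{dR},\sigma})$ -- that is, to membership in $H^{1}_{g,J}(W)$. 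Part (7) combines Lemma~\ref{lem:deRhamselmer} in degree $1$ (transferred from representations to $B$-pairs via Berger's equivalence with $(\varphi,\Gamma)$-modules), the identity $H^{0}_{g,J}=H^{0}$ in degree $0$ (which matches analytic invariants), and in degree $2$ a comparison of dualising objects: for $\mathbf{n}=0$ and $J=\Sigma_{K}\setminus\sigma_{0}$ our $\chi_{D}$ agrees up to an unramified twist with the analytic dualising character of \cite{colmez2016representations,MSVW}, and combining our duality in (5) with analytic Tate duality then forces the identification.

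The central task is (5). The inclusion of $B$-pairs $W(\delta)\hookrightarrow W$ is Tate-dual to the inclusion $W^{*}(\chi_{cyc})=W^{*}(\chi_{D})(\delta)\hookrightarrow W^{*}(\chi_{D})$, and these two inclusions are compatible with the cup-product pairings of Theorem~\ref{thm:BpairEulerTate}, both landing in $H^{2}(\chi_{cyc})\cong E$. By Remark~\ref{rem:sesWdelta} applied to $W$ and to $W^{*}(\chi_{D})$,
\begin{align*}
H^{1}_{g,J}(W)&=\operatorname{image}\bigl(H^{1}(W(\delta))\to H^{1}(W)\bigr),\\
H^{1}_{g,J}(W^{*}(\chi_{D}))&=\operatorname{image}\bigl(H^{1}(W^{*}(\chi_{cyc}))\to H^{1}(W^{*}(\chi_{D}))\bigr).
\end{align*}
Perfection of the ambient Tate pairings for $W$ and $W(\delta)$, together with compatibility of the two inclusions under duality, forces the kernel of $H^{1}(W^{*}(\chi_{cyc}))\to H^{1}(W^{*}(\chi_{D}))$ to coincide with the Tate-annihilator of $H^{1}_{g,J}(W)\subset H^{1}(W)$. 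Hence the Tate pairing descends to a well-defined perfect pairing $H^{1}_{g,J}(W)\times H^{1}_{g,J}(W^{*}(\chi_{D}))\to H^{2}_{g,J}(\chi_{D})\cong E$, and non-degeneracy in either variable follows from non-degeneracy of the ambient pairings. The degree $0$ and $2$ cases reduce tautologically to the usual Tate pairing $H^{0}(W)\times H^{2}(W^{*}(\chi_{cyc}))\to E$. I expect the main obstacle to be establishing the compatibility of the two inclusions with the cup-product pairings at the level of complexes (i.e.\ producing the relevant commutative diagram of derived pairings); once that diagram is in place, perfection of the restricted pairing reduces to linear algebra.
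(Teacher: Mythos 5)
Your proposal follows essentially the same route as the paper for parts (1)--(4), (6): reduce (2) to Theorem~\ref{thm:deltafunktor} (your observation that de Rham plus weights in $[-n_\sigma,0]$ forces positivity at $\sigma$ is exactly the needed bridge), (3) to the definition $H^2_{g,J}(W)=H^2(W(\delta))$ together with $\chi_D\delta=\chi_{cyc}$, (4) to Lemma~\ref{lem:EPCgeneral}, and (1) to the standard identification of Yoneda extensions with classes killed by $H^1(W)\to H^1(W_{\mathrm{dR},\sigma})$. Your normalisation $\kappa_\sigma=n_\sigma+1$, justified by Lemma~\ref{lem:deltamodification}, is a sensible simplification and is in fact the only choice for which $W^*(\chi_D)$ lands literally in $\mathcal{BP}_E^{J,\mathbf{n}}$ rather than in $\mathcal{BP}_E^{J,\mathbf{n}'}$ for larger $\mathbf{n}'$. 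Where you genuinely diverge is part (5): the paper uses the Euler--Poincar\'e formula (fed by the degree-$(0,2)$ dualities) to match $\dim H^1_{g,J}(W)$ with $\dim H^1_{g,J}(W^*(\chi_D))$ and then only proves injectivity of $H^1_{g,J}(W)\to H^1_{g,J}(W^*(\chi_D))^*$ in one variable; you instead observe that $g_W$ and $g_{W^*(\chi_D)}$ are adjoint under the two ambient perfect Tate pairings, so that $\ker g_{W^*(\chi_D)}$ is the annihilator of $\operatorname{im}g_W=H^1_{g,J}(W)$ and the descended pairing on the images is perfect by pure linear algebra, with no dimension count. Both arguments rest on the same unproven-but-routine compatibility (the paper's diagram \eqref{eq:pairingdiagram}), which you correctly identify as the real content; your version is a little cleaner and gives nondegeneracy in both variables at once.

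The one soft spot is part (7) in degree $2$: you invoke ``analytic Tate duality'' as an off-the-shelf input, but the paper explicitly points out that the duality of \cite{MSVW} is only available after base change to a transcendental extension and is therefore not directly applicable; the needed statement $H^2_{an}(M)^*\cong H^0_{an}(M^*(\chi_D))$ in degrees $(0,2)$ has to be extracted ad hoc from \cite[Lemma 4.5.1]{SV2018}. Your overall strategy for (7) (degree $0$ directly, degree $1$ via Lemma~\ref{lem:deRhamselmer} and extensions, degree $2$ by playing your duality (5) against the analytic one) is the paper's strategy, so this is a citation-level gap rather than a structural one, but as written the degree-$2$ step rests on a result you cannot simply quote.
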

	\begin{proof}
		The first point is standard, the second point was established in Theorem \ref{thm:deltafunktor}. The third point follows from the surjectivity $E\cong H^2(\chi_{cyc})=H^2(\chi_{D}\delta) \to H^2(\chi_D)$ which can be seen from the long exact sequence of the short exact sequence in Remark \ref{rem:sesWdelta}. The Euler--Poincaré formula was established in Lemma \ref{lem:EPCgeneral}.\\
		For duality \footnote{We remark that the present duality for $W=B_E(\chi_D)$ has been established by Ding in \cite[Lemma 1.19]{ding2017protect}.}, the non-trivial case is $i=1.$ Consider the commutative diagram 
		% https://q.uiver.app/#q=WzAsMTIsWzAsMCwiSF4xKFcoXFxkZWx0YSkpIl0sWzEsMCwiXFx0aW1lcyJdLFsxLDEsIlxcdGltZXMiXSxbMSwyLCJcXHRpbWVzIl0sWzAsMSwiSF4xKFcpIl0sWzAsMiwiSF4xKFcpIl0sWzQsMSwiSF4yKFxcY2hpX0QpIl0sWzIsMiwiSF4xKFdeKihcXGNoaV97Y3ljfSkpIl0sWzIsMCwiSF4xKFdeKihcXGNoaV97RH0pKSJdLFs0LDAsIkheMihcXGNoaV97Y3ljfSkiXSxbMiwxLCJIXjFfe2csSn0oV14qKFxcY2hpX0QpKSJdLFs0LDIsIkheMihcXGNoaV97Y3ljfSkiXSxbMCw0LCJmX1ciXSxbNCw1LCIiLDAseyJzdHlsZSI6eyJoZWFkIjp7Im5hbWUiOiJub25lIn19fV0sWzgsOV0sWzEwLDZdLFs3LDExXSxbMTAsOCwiXFxpb3RhIiwyLHsic3R5bGUiOnsidGFpbCI6eyJuYW1lIjoiaG9vayIsInNpZGUiOiJ0b3AifX19XSxbNywxMCwiZl97V14qKFxcY2hpX0QpfSIsMl0sWzksNiwiXFxjb25nIiwwLHsic3R5bGUiOnsiaGVhZCI6eyJuYW1lIjoibm9uZSJ9fX1dLFsxMSw2LCJcXGNvbmciLDIseyJzdHlsZSI6eyJoZWFkIjp7Im5hbWUiOiJub25lIn19fV1d
		\begin{equation}\begin{tikzcd}
				\label{eq:pairingdiagram}
				{H^1(W(\delta))} & \times & {H^1(W^*(\chi_{D}))} && {H^2(\chi_{cyc})} \\
				{H^1_{g,J}(W)} & \times & {H^1_{g,J}(W^*(\chi_D))} && {H^2(\chi_D)} \\
				{H^1(W)} & \times & {H^1(W^*(\chi_{cyc}))} && {H^2(\chi_{cyc})}
				\arrow["{g_W}", from=1-1, to=2-1]
				\arrow[from=1-3, to=1-5]
				\arrow["\cong", no head, from=1-5, to=2-5]
				\arrow["\iota",  hook,, from=2-1, to=3-1]
				\arrow["\iota"', hook, from=2-3, to=1-3]
				\arrow[from=2-3, to=2-5]
				\arrow["{g_{W^*(\chi_D)}}"', from=3-3, to=2-3]
				\arrow[from=3-3, to=3-5]
				\arrow["\cong"', no head, from=3-5, to=2-5]
		\end{tikzcd}\end{equation}
		where $g_\bullet$ denotes the map from Remark \ref{rem:sesWdelta}.  By the Euler--Poincaré formula the dimensions of both spaces agree, hence it suffices to show that the induced map $H^1_{g,J}(W) \to H^1_{g,J}(W^*(\chi_D))^*$ is injective. Indeed we have $H^0_{g,J}(W) \cong H^2(W^*(\chi_{cyc})) = H^2_{g,J}(W^*(\chi_D))$ since $\chi_D= \chi_{cyc}/\delta$ and by analogous reasoning $H^0(W^*(\chi_D)) \cong H^2(W(\chi_{D}^{-1}\chi_{cyc}))=H^2_{g,J}(W).$ Let $x \in H^1_{g,J}(W)$ such that $\langle x,y \rangle=0$ for all $y\in H^1_{g,J}(W^*(\chi_D)).$ Viewing $x$ as an element of $H^1(W)$ we conclude from \eqref{eq:pairingdiagram} that $\langle x,y'\rangle=0$ for any $y' \in H^1(W^*(\chi_{cyc}))$ and hence $x=0$ by usual Tate--Duality.  \\
		If $J=\emptyset$ then $\delta$ is the trivial character and $H^i_{g,\emptyset}(W)=H^i(W).$\\
		For the case $J = \Sigma_L\setminus\{\sigma_0\}$ first recall that a representation is called analytic, if it is Hodge--Tate of weight $0$ for all $\sigma \in J.$ By Lemma \ref{lem:deRhamselmer} this is equivalent to requiring, that $V$ is de Rham at $\sigma \in J$ of weight $0.$ 
		Let $W \in \mathcal{BP}^{J,(0,\dots,0)}_E$ and $M$ its' associated Lubin-Tate $(\varphi_L,\Gamma_L)$-module over $\mathcal{R}_E.$ By comparing with extensions we can see that the first cohomology groups agree. The comparison in degree $0$ is straightforward. To prove $H^2_{g,J}(W)\cong H^2_{an}(M)$ we can prove the dual statement $H^0(W^*(\chi_D)) \cong H^2_{an}(M)^*.$ By inductively applying Lemma \ref{lem:deltamodification} we can choose $\kappa_\sigma=1$ for all $\sigma \in J$ such that $\chi_D= x\abs{x}.$ The duality results in \cite{MSVW} used, unfortunately, a base change to a transcendental field extension and are hence not applicable directly. The isomorphism $H^2_{an}(M)^* \cong H^0(M^*(\chi_D))$ can be deduced ad-hoc by using \cite[Lemma 4.5.1]{SV2018}. This suffices to conclude $$H^2_{g,J}(W) \cong H^0(W^*(\chi_D)) \cong H^0(M^*(\chi_D)) \cong H^2_{an}(M)$$
		using the preceding dualities and the comparison in degree $0.$ 
	\end{proof}
	By applying our results in the case $K=\QQ_p,$ we obtain an explicit formula for the dimension of $H^1_g.$
	\begin{cor} \label{cor:newformula}
		Let $K=\QQ_p$ and let $W$ be a de Rham $B$-pair with non-positive Hodge--Tate weights contained in $[-n,0].$  Then 
		\begin{align*}\dim_{E} H^1_g(W) &= \dim_{E} H^0(W)+  \dim_{E}H^2(W(x^{n+1})) \\
			&= \dim_{E} H^0(W)+\dim_{E} H^0(W^*(x^{-n-1})(1)). \end{align*}
	\end{cor}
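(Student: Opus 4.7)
The plan is to deduce this corollary directly from Theorem \ref{thm:mainresult} in the special case where there is only one embedding. When $K = \QQ_p$ we have $\Sigma_K = \{\id\}$, and since $W$ is de Rham with Hodge--Tate weights in $[-n,0]$, the object $W$ belongs to $\mathcal{BP}_E^{J,\mathbf{n}}$ for $J := \Sigma_K$ and $\mathbf{n} := n$. With this choice of $J$ one has $H^1_{g,J}(W) = H^1_g(W)$ by definition of the Bloch--Kato Selmer group.

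For the first equality I would simply apply the Euler--Poincaré formula of Theorem \ref{thm:mainresult}(4). The left hand side is $-\operatorname{rank}(W)([K:\QQ_p] - \abs{J}) = -\operatorname{rank}(W)(1-1) = 0$, so the alternating sum collapses to
\begin{equation*}
\dim_E H^1_{g,J}(W) \;=\; \dim_E H^0_{g,J}(W) + \dim_E H^2_{g,J}(W).
\end{equation*}
By definition of the extended partial Selmer groups (cf.\ Theorem \ref{thm:deltafunktor}) one has $H^0_{g,J}(W) = H^0(W)$ and $H^2_{g,J}(W) = H^2(W(\delta))$ for any permissible $\delta$. Since the choice $\kappa = n+1 > n$ satisfies the hypothesis of Theorem \ref{thm:mainresult}, we may take $\delta = x^{n+1}$, yielding $H^2_{g,J}(W) = H^2(W(x^{n+1}))$. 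This gives the first displayed equality.

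For the second equality I would invoke classical Tate duality for $B$-pairs (Theorem \ref{thm:BpairEulerTate}(3)) applied to the $B$-pair $W(x^{n+1})$. This gives a perfect pairing
\begin{equation*}
H^2(W(x^{n+1})) \times H^0\bigl((W(x^{n+1}))^*(1)\bigr) \to H^2(B_E(1)) \cong E,
\end{equation*}
and since $(W(x^{n+1}))^*(1) = W^*(x^{-n-1})(1)$, one concludes $\dim_E H^2(W(x^{n+1})) = \dim_E H^0(W^*(x^{-n-1})(1))$. There is no real obstacle here; the corollary is simply the concrete reading of the main theorem when the index set $J$ exhausts the (single) embedding, where the rank-dependent term of the Euler--Poincaré formula vanishes.
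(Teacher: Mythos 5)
Your proposal is correct and is exactly the paper's argument: the paper's proof consists of the single line ``Apply Theorem \ref{thm:mainresult} to $J=\{\id\}$,'' and you have simply spelled out the details (the vanishing of the rank term in the Euler--Poincaré formula since $[K:\QQ_p]=\abs{J}=1$, the identification $H^2_{g,J}(W)=H^2(W(x^{n+1}))$ for the permissible choice $\kappa=n+1$, and classical Tate duality for the second equality). No gaps.
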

	\begin{proof}
		Apply Theorem \ref{thm:mainresult} to $J = \{\id\}.$
	\end{proof}
	
	\begin{rem} \label{rem:classicformula} Let $V =V(W) \in \operatorname{Rep}_{E}(G_{K})$ be positive de Rham with Hodge--Tate weights contained in $[-n,0].$ Then
		$$\dim_{E}H^2(W(x^{n+1})) = \dim_E(D_{cris}(V^*(1)))^{\varphi=1}, $$
		where $D_{cris}(-):= H^0(G_K,\Bcris \otimes_{\QQ_p}-).$
	\end{rem}
	\begin{proof} By \cite[Proposition 3.8]{bloch1990functions} $H^1_g(V)$ is orthogonal to $H^1_{e}(V^*(1))$. Let us abbreviate $\dim:=\dim_{E}$ and write $t_V:= D_{\mathrm{dR}}(V)/D_{\mathrm{dR}}^+(V)$ and $h^i_{V}:= \dim H^i(G_K,V).$
		From the fundamental exact sequence tensored with $V^*(1)$ one obtains the formula
		\begin{align*}
			\dim H^1_e(V^*(1)) = 
			\dim t_{V^*(1)}- \dim D_{cris}(V^*(1))^{\varphi=1} + h^0_{V^*(1)}.
		\end{align*}
		By orthogonality and using $$h^0_{V^*(1)}- h^1_{V} = -h^2_{V^*(1)} -[K:\QQ_p] \dim(V)$$
		we get 
		\begin{align*}
			-\dim  H^1_g(V) &= 
			\dim t_{V^*(1)}- \dim D_{cris}(V^*(1))^{\varphi=1} + h^0_{V^*(1)}- h^1_{V}  .\\
			&= \dim t_{V^*(1)}- \dim D_{cris}(V^*(1))^{\varphi=1} -h^2_{V^*(1)} -[K:\QQ_p] \dim(V)\\
			& = -\dim t_V - h^0(V)- \dim D_{cris}(V^*(1))^{\varphi=1},
		\end{align*}
		using in the last equation $h^2_{V^*(1)}=h^0_V,$ $\dim t_{V^*(1)}= [K:\QQ_p] \dim(V)-\dim D_{\mathrm{dR}}^+(V^*(1))$ and $\dim D_{\mathrm{dR}}^+(V^*(1)) = \dim t_V.$
		This leads to the well-known formula
		\begin{equation}\label{eq:classic} \dim H^1_g = \dim D_{\mathrm{dR}}(V)/D_{\mathrm{dR}}(V)^+ + \dim H^0(V) + \dim D_{cris}(V^*(1))^{\varphi=1}.\end{equation}
		Now consider for $\tilde{W}:= W^*(x^{-n-1})(1)$
		$$H^0(\tilde{W}) = \operatorname{ker}(\tilde{W}_e^{G_{K}}\oplus(\tilde{W}_{\mathrm{dR}}^+)^{G_{K}} \to \tilde{W}_{\mathrm{dR}}^{G_{K}} ).$$
		The Hodge--Tate weights are shifted precisely in a way, in which the map $(\tilde{W}_{\mathrm{dR}}^+)^{G_{K}} \to \tilde{W}_{\mathrm{dR}}^{G_{K}}$ is surjective, so the kernel of the above map is just $\tilde{W}_e^{G_{K}}  \cong \operatorname{D}_{\text{cris}}(V^*(1)).$
		Using that twisting by $x^k$ does not change the $\fB_e$-component of a $B$-pair (by Lemma \ref{lem:deltatwist}) meaning that we have $\tilde{W}_e = W(V^*(1))_e = (\Bcris \otimes_{\QQ_p} V^{*}(1))^{\varphi=1}.$ 
	\end{proof}
	The classical formula \eqref{eq:classic} holds even for representations, which are not positive. In the case that $V$ is positive the first summand of \eqref{eq:classic} is zero and the relationship between the new formula from Corollary \ref{cor:newformula} and \eqref{eq:classic} is clarified by Remark \ref{rem:classicformula}.
	
	Note that even though the objects in $\mathcal{BP}_E^{J,\mathbf{n}}$ behave nicely, they can still fail to be overconvergent. 
	\begin{ex} Assume $J \subsetneq \Sigma_K.$
		Let $\rho \colon K^\times \to E^\times$ be a character, which belongs to $\mathcal{BP}_E^{J,\mathbf{n}}$ but has at least two non-zero Hodge--Tate weights \footnote{An explicit example of such a character would be $E(-1).$}. Then $H^1_{g,J}(W(\rho)) \neq 0$ and the extension corresponding to any non-zero class is not overconvergent. 
	\end{ex}
	\begin{proof} The fact that $H^1_{g,J}(W(\rho)) \neq 0$ follows from the Euler--Poincaré-Formula. By assumption $E(\rho)$ is not $K$-analytic. By \cite[Theorem 5.20]{FX12} any non-trivial extension of $E$ by $E(\rho)$ is not overconvergent. 
	\end{proof}
	It would be interesting to obtain $H^i_{g.J}(W)$ as the cohomology of a complex depending on $W.$ While this is the case for analytic cohomology, we do not know how to produce $H^i_{g,J}(W)$ as the cohomology of an explicit complex depending on $W$ without resorting to a construction with a categorical flavour such as in \cite{JTNB_2009__21_2_285_0}.

	\let\stdthebibliography\thebibliography
	\let\stdendthebibliography\endthebibliography
	\renewenvironment*{thebibliography}[1]{%
		\stdthebibliography{MSVW25}}
	{\stdendthebibliography}

	\bibliographystyle{amsalpha}
	
	\bibliography{Literatur}
\end{document}